\newtheorem{theorem}{Theorem}[section]
\newtheorem{proposition}[theorem]{Proposition}
\newtheorem{definition}[theorem]{Definition}
\newtheorem{example}[theorem]{Example}
\newtheorem{remark}[theorem]{Remark}
\newtheorem{corollary}[theorem]{Corollary}
\newtheorem{lemma}[theorem]{Lemma}
\newtheorem{warning}[theorem]{Warning}
\newtheorem{question}[theorem]{Question}
\newcommand\blfootnote[1]{
  \begingroup
  \renewcommand\thefootnote{}\footnote{#1}
  \addtocounter{footnote}{-1}
  \endgroup
}
\begin{document}

\title{Euler characteristic and homotopy cardinality}
\author{John D. Berman}
\maketitle

\begin{abstract}
Baez asks whether the Euler characteristic (defined for spaces with finite homology) can be reconciled with the homotopy cardinality (defined for spaces with finite homotopy). We consider the smallest $\infty$-category $\text{Top}^\text{rx}$ containing both these classes of spaces and closed under homotopy pushout squares. In our main result, we compute the K-theory $K_0(\text{Top}^\text{rx})$, which is freely generated by equivalence classes of connected $p$-finite spaces, as $p$ ranges over all primes. This provides a negative answer to Baez's question globally, but a positive answer when we restrict attention to a prime.
\end{abstract}

\section{Introduction}
\subsection{The problem}
\noindent A fundamental problem in algebraic topology is the assignment of numerical invariants to (weak equivalence classes of) topological spaces. The oldest and most basic of these is the Euler characteristic, which is uniquely defined on finite CW complexes by the following three properties:\blfootnote{The author was supported by an NSF Postdoctoral Fellowship under grant 1803089.}
\begin{enumerate}
\item $\chi(\emptyset)=0$;
\item $\chi(\ast)=1$;
\item if $D\cong B\,\cup_A^hC$ is a homotopy pushout, $\chi(A)+\chi(D)=\chi(B)+\chi(C)$.
\end{enumerate}

\noindent In particular, the Euler characteristic is invariant under weak equivalence and turns disjoint unions into sums.

In addition to the additive property (3), the Euler characteristic satisfies a multiplicative property:
\begin{enumerate}\setcounter{enumi}{3}
\item if $F\rightarrow E\rightarrow B$ is a fiber sequence of finite CW complexes and $B$ is connected, then $\chi(E)=\chi(F)\chi(B)$.
\end{enumerate}

\noindent In the spirit of Euler, it is tempting to use (4) to `compute' Euler characteristics of more general spaces. If $G$ is a finite group, there is a fiber sequence $G\rightarrow\ast\rightarrow BG$, whence we might deduce $\chi(BG)=\frac{1}{|G|}$.

This is an old idea, dating at least to a 1961 note of Wall \cite{Wall}, where he uses such informal Euler characteristic computations to calculate (for example) that any index 2 subgroup of $\mathbb{Z}/2\ast\mathbb{Z}/3$ is isomorphic to $\mathbb{Z}/3\ast\mathbb{Z}/3$.

\begin{remark}
\label{EarlyRmk}
More generally, suppose $X$ is connected, has finite homotopy groups, and its homotopy groups above a certain degree are zero. Then we may form a Postnikov tower and iteratively apply (4) to conclude that $$\chi(X)=\frac{|\pi_0X||\pi_2X|\cdots}{|\pi_1X||\pi_3X|\cdots}.$$ We will refer to such a space $X$ as \emph{$\pi_\ast$-finite}.

This alternating product is sometimes called the \emph{homotopy cardinality} of $X$. It was studied by Baez and Dolan \cite{BaezDolan} due to its connections with mathematical physics and also plays a role in probabilistic group theory \cite{Tao}.
\end{remark}

\begin{question}[Baez \cite{Baez}]
Is there a well-defined invariant satisfying reasonable properties, which agrees with the Euler characteristic on finite CW complexes and agrees with the homotopy cardinality on $\pi_\ast$-finite spaces?
\end{question}

\subsection{Results}
\noindent The answer is no, if we expect the invariant to behave well with homotopy pushouts. Indeed, if $p\neq q$ are prime, then there is a homotopy pushout square $$\xymatrix{
B\mathbb{Z}/pq\ar[r]\ar[d] &B\mathbb{Z}/p\ar[d] \\
B\mathbb{Z}/q\ar[r] &\ast.
}$$ If there were such an invariant $\chi$, we would have $$\chi(B\mathbb{Z}/pq)+\chi(\ast)=\chi(B\mathbb{Z}/p)+\chi(\mathbb{Z}/q),$$ which is to say $\frac{1}{pq}+1=\frac{1}{p}+\frac{1}{q}$ for all primes $p\neq q$. Of course this is nonsense.\footnote{This neat argument was pointed out to the author by Schlank and Yanovski \cite{SY}.}

However, if we restrict our attention only to $\pi_\ast$-finite spaces for which all homotopy groups are $p$-groups, then there \emph{is} such an invariant. In fact, we don't even have to restrict attention to a single prime $p$. We will prove the following stronger result (following some notation):

\begin{definition}
A space is \emph{$p$-finite} if it is $\pi_\ast$-finite and all homotopy groups are $p$-groups. (Warning: in this paper, any space which is $\pi_\ast$-finite or $p$-finite is assumed to be connected. This differs from many authors.)

A space is \emph{of rational characteristic} if it can be built from $\pi_\ast$-finite spaces out of finite homotopy colimits (that is, out of iterated disjoint unions and homotopy pushouts).
\end{definition}

\begin{theorem}
\label{Thm1}
If $f$ is any function from equivalence classes of $p$-finite spaces to an abelian group $A$ (where $p$ varies over all primes), then there is a unique extension of $f$ to rational characteristic spaces subject to the properties:
\begin{enumerate}
\item $f(\emptyset)=0$;\setcounter{enumi}{2}
\item if $D\cong B\,\cup_A^hC$ is a homotopy pushout, $\chi(A)+\chi(D)=\chi(B)+\chi(C)$.
\end{enumerate}
\end{theorem}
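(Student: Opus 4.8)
The plan is to recognize the statement as a presentation of the universal additive invariant, i.e. as a computation of $K_0$ of a suitable $\infty$-category, and then to identify the generators and relations explicitly. Concretely, let $\text{Top}^{\text{rx}}$ denote the smallest full subcategory of spaces containing the $\pi_\ast$-finite spaces and closed under finite homotopy colimits; the abelian group of ``additive invariants valued in $A$'' is precisely $\mathrm{Hom}(K_0(\text{Top}^{\text{rx}}), A)$, where $K_0$ is generated by symbols $[X]$ for $X \in \text{Top}^{\text{rx}}$ subject to $[\emptyset]=0$ and the pushout relation $[A]+[D]=[B]+[C]$. (Property (2), $\chi(\ast)=1$, is \emph{not} imposed here: the theorem only asserts unique extension of an arbitrary $f$, so $\ast$ gets whatever value $f$ assigns it; this is why no contradiction arises.) So the theorem is equivalent to the claim that $K_0(\text{Top}^{\text{rx}})$ is the free abelian group on equivalence classes of connected $p$-finite spaces, $p$ ranging over all primes.

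First I would set up the universal property cleanly: define $U(X) \in K_0(\text{Top}^{\text{rx}})$ as the image of $[X]$, so that any invariant $f$ on $p$-finite spaces extends uniquely to an additive invariant on $\text{Top}^{\text{rx}}$ if and only if the classes $\{U(X) : X \text{ connected } p\text{-finite}\}$ form a basis of $K_0(\text{Top}^{\text{rx}})$. The nontrivial content splits into two halves. \emph{Generation}: every rational characteristic space has its $U$-class expressible as a $\mathbb{Z}$-linear combination of connected $p$-finite classes. \emph{Freeness}: there are no relations among these, i.e. the evident map from the free abelian group on connected $p$-finite spaces to $K_0$ is injective.

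For generation, I would argue by induction on the finite-homotopy-colimit construction. The base case is $\pi_\ast$-finite spaces $X$: by the arithmetic fracture square (Sullivan), $X$ is the homotopy pullback of its $p$-completions over its rationalization, but more usefully for additive invariants one uses that a $\pi_\ast$-finite space decomposes up to the pushout relation into its $p$-primary pieces — concretely, for each prime $p$ there is a $p$-finite space $X_{(p)}$ (the fiberwise $p$-completion, or the relevant Postnikov-style piece), and an inclusion–exclusion / telescope argument using pushout squares along the finitely many relevant primes expresses $U(X)$ in terms of the $U(X_{(p)})$ and $U(\ast)$; but $U(\ast)$ itself must be handled, and indeed $\ast$ is $p$-finite for every $p$ (trivially), so it is among our generators. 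For the inductive step, disjoint union gives $U(Y \sqcup Z) = U(Y)+U(Z)$ directly from the pushout relation with $A=\emptyset$, and a homotopy pushout $D = B \cup_A^h C$ gives $U(D) = U(B)+U(C)-U(A)$, so the span of $p$-finite classes is visibly closed under the operations generating $\text{Top}^{\text{rx}}$. The one genuine subtlety is that a pushout of $\pi_\ast$-finite spaces need not be $\pi_\ast$-finite (it can have infinitely many nonzero, though finitely generated, homotopy groups — e.g. $S^2 = \ast \cup_{S^1}^h \ast$ up to the fact that $S^1$ is not $\pi_\ast$-finite, so one must be careful about exactly which spaces enter), so I would need a lemma that $\text{Top}^{\text{rx}}$ is exactly the spaces whose homology is ``arithmetically finite'' in an appropriate sense, and that its $K_0$ is unaffected.

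For freeness — and I expect this to be the main obstacle — I must produce enough additive invariants to separate all connected $p$-finite spaces. The natural candidates are, for each prime $p$ and each connected $p$-finite space $Y$, the ``number of components mapping to $Y$'' style invariant, or more robustly, invariants built from counting (homotopy classes of) maps: for a fixed $p$-finite $Y$, the assignment $X \mapsto \chi(\mathrm{Map}(X, Y))$ or a $p$-adic valuation thereof. The key point is that such mapping-space invariants are additive in $X$ (since $\mathrm{Map}(-, Y)$ turns homotopy pushouts into homotopy pullbacks, and the Euler characteristic / homotopy cardinality is multiplicative-then-additive appropriately) and, by a Möbius-inversion argument over the poset of connected $p$-finite spaces ordered by a suitable notion of ``covering,'' these invariants can be triangulated to detect each basis element exactly once. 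I would carry this out prime by prime, using that $p$-finite spaces for distinct primes are ``orthogonal'' under such mapping invariants (a map from a $p$-finite space to a $q$-finite space for $p \neq q$ factors through $\ast$), so the freeness statement decouples into the corresponding statement at each prime, which is then a bookkeeping exercise with the (combinatorially rich but well-understood) category of $p$-finite spaces.
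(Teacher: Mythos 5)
Your overall architecture (reformulate as a computation of $K_0$, prove generation and freeness separately, decouple the primes by orthogonality of $p$-finite and $q$-finite spaces) matches the paper's, and your generation/prime-assembly sketch is close in spirit to what the paper does (the paper runs an induction on the \emph{support} of a rationally contractible space, using the pushout square $X\rightarrow\hat{X}_p\rightarrow C$ on the cofiber of $p$-completion; note the intermediate cofibers are not $\pi_\ast$-finite, so the induction must be carried out on the larger class of rationally contractible spaces of finite support, not on $\pi_\ast$-finite spaces as you suggest). But there is a genuine gap in your freeness argument, which is the heart of the theorem. The invariant $X\mapsto\chi(\text{Map}(X,Y))$ for fixed $p$-finite $Y$ is \emph{not} a characteristic function: $\text{Map}(-,Y)$ sends pushouts to pullbacks, and no additive identity holds there. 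Already for a disjoint union $D=B\sqcup C$ (pushout over $\emptyset$) you would need $1+\chi(\text{Map}(B,Y))\cdot\chi(\text{Map}(C,Y))=\chi(\text{Map}(B,Y))+\chi(\text{Map}(C,Y))$, which fails whenever both factors differ from $1$. The parenthetical appeal to ``multiplicative-then-additive'' does not rescue this; multiplicativity of homotopy cardinality on fiber sequences produces a multiplicative, not additive, behavior on pullbacks.

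The paper's fix is to reverse the variance: it fixes a $p$-finite space $K$ and uses the \emph{covariant} functor $\text{Map}(K,-)$, setting $\chi_K(X)=\chi_\ast(\text{Map}(K,X))$. For this to be a characteristic function one needs $\text{Map}(K,-)$ to preserve finite colimits, which is false for ordinary spaces; it becomes true only after passing to $p$-profinite spaces and invoking the Sullivan Conjecture in Lurie's form (Theorem \ref{ThmLurie}). This is the essential input your proposal is missing entirely, and it is also why the paper works in $\text{Top}_p^\wedge$ rather than with spaces directly. Granting these $\chi_K$, your instinct that a M\"obius-inversion/triangularity argument over a suitable order on $p$-finite spaces finishes the job is correct, though the bookkeeping is more delicate than ``well-understood'': the paper must run a double induction over Postnikov degree and over posets of $n$-surjections, and the restriction maps between these posets fail to be injective, forcing a nonstandard form of M\"obius inversion (Lemma \ref{Lem1}).
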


\noindent Let $\text{Top}^\text{rx}$ denote the $\infty$-category of rational characteristic spaces, and $\text{Top}^\text{rx}_p$ a $p$-complete analogue that we define in Section 2. Then Theorem \ref{Thm1} amounts to the assertion that the algebraic K-theory $K_0(\text{Top}^\text{rx})$ is freely generated by the $p$-finite spaces.

There are two ideas here: first, that the $p$-finite spaces form a kind of basis for $\text{Top}^\text{rx}_p$ (linearly independent in the sense of finite colimits); second, that $\text{Top}^\text{rx}$ splits K-theoretically into its prime pieces. In summary we have the following reformulation of Theorem \ref{Thm1}:

\begin{corollary}
\label{Cor1}
There is an equivalence $$K_0(\text{Top}^\text{rx})\cong\bigoplus_p\bigoplus_{\text{Top}_p^\text{fin}}\mathbb{Z},$$ where the direct sum is taken over all equivalence classes of $p$-finite spaces.
\end{corollary}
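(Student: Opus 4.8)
The plan is to deduce the Corollary formally from Theorem~\ref{Thm1}: all of the mathematical content is already in that theorem, and the Corollary is merely its translation into the language of $K_0$ via the Yoneda lemma. Recall that $K_0$ of an $\infty$-category $\mathcal{C}$ with finite colimits --- with every morphism regarded as a cofibration --- admits the standard presentation: $K_0(\mathcal{C})$ is the abelian group on generators $[X]$, one per equivalence class of object, modulo the relations $[\emptyset]=0$ and $[A]+[D]=[B]+[C]$ for every homotopy pushout $D\simeq B\cup_A^h C$. These encode exactly properties (1) and (3) of Theorem~\ref{Thm1}: the case $C=\emptyset$ is the cofiber-sequence relation $[B]=[A]+[B/A]$ with $B/A:=B\cup_A^h\emptyset$, and the general case follows from these using $D/B\simeq C/A$ to write $[D]=[B]+[D/B]=[B]+[C]-[A]$. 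Dually, for every abelian group $A$ the set $\mathrm{Hom}_{\mathbb{Z}}(K_0(\mathcal{C}),A)$ is canonically identified with the set of $A$-valued functions on equivalence classes of objects of $\mathcal{C}$ that satisfy (1) and (3).

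Now take $\mathcal{C}=\text{Top}^\text{rx}$. By the existence and uniqueness clauses of Theorem~\ref{Thm1}, restriction along the inclusion of $p$-finite spaces into $\text{Top}^\text{rx}$ is a bijection from the set of such $A$-valued functions onto the set of \emph{all} $A$-valued functions on equivalence classes of $p$-finite spaces, $p$ ranging over the primes; and that set of functions is exactly $\mathrm{Hom}_{\mathbb{Z}}\big(\bigoplus_p\bigoplus_{\text{Top}_p^\text{fin}}\mathbb{Z},\,A\big)$ by the universal property of the free abelian group. Each of these identifications is natural in $A$: the first by the construction of $K_0$; the second because the extension in Theorem~\ref{Thm1} is unique, so that composition with any homomorphism $A\to A'$ commutes with it; the third by freeness. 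Hence $\mathrm{Hom}_{\mathbb{Z}}(K_0(\text{Top}^\text{rx}),-)$ and $\mathrm{Hom}_{\mathbb{Z}}\big(\bigoplus_p\bigoplus_{\text{Top}_p^\text{fin}}\mathbb{Z},-\big)$ are naturally isomorphic functors on abelian groups, and the Yoneda lemma produces the asserted isomorphism $K_0(\text{Top}^\text{rx})\cong\bigoplus_p\bigoplus_{\text{Top}_p^\text{fin}}\mathbb{Z}$; unwinding the identifications, it carries the class of a $p$-finite space to the corresponding basis vector.

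So, granting Theorem~\ref{Thm1}, the Corollary is purely formal, and the genuine obstacle lies upstream. To prove Theorem~\ref{Thm1} I would separate the two assertions flagged in the introduction. First, that $K_0(\text{Top}^\text{rx}_p)$ is free on the $p$-finite spaces: spanning is essentially automatic, since by its definition $\text{Top}^\text{rx}_p$ is generated under finite colimits by its $p$-finite objects, so every class is a $\mathbb{Z}$-combination of those; the substance is \emph{linear independence}, which I would establish by exhibiting a family of additive invariants large enough to separate the $p$-finite spaces --- for instance homotopy-cardinality-weighted counts of homotopy classes of maps into a fixed $p$-finite target, or Euler characteristics with coefficients in local systems pulled back from such a target --- and by arranging the argument as a d\'evissage along Postnikov towers so that it reduces to Eilenberg--MacLane spaces $K(G,n)$ with $G$ a finite abelian $p$-group. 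This separation of the $p$-finite spaces is the crux. Second, the prime splitting $K_0(\text{Top}^\text{rx})\cong\bigoplus_p K_0(\text{Top}^\text{rx}_p)$: here I would use the $p$-completion functors $\text{Top}^\text{rx}\to\text{Top}^\text{rx}_p$ together with the fact that $\pi_\ast$-finite spaces become trivial after rationalization, running an arithmetic-fracture argument to show that the resulting map $K_0(\text{Top}^\text{rx})\to\prod_p K_0(\text{Top}^\text{rx}_p)$ has image the direct sum and is an isomorphism onto it.
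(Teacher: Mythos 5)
Your proposal is correct and takes essentially the same route as the paper: the paper also deduces Corollary~\ref{Cor1} from Theorem~\ref{Thm1} by identifying $\mathrm{Hom}(K_0(\text{Top}^\text{rx}),A)$ with characteristic functions, identifying the latter with $\mathrm{Hom}\bigl(\bigoplus_p\bigoplus_{\text{Top}_p^\text{fin}}\mathbb{Z},A\bigr)$ via the restriction bijection of Theorem~\ref{Thm1}, and invoking the Yoneda lemma. Your additional sketch of how Theorem~\ref{Thm1} itself would be proved is not needed for the Corollary, but it too broadly matches the paper's actual strategy.
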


\noindent In particular, there is a (unique) invariant $\chi$ which
\begin{itemize}
\item agrees with the Euler characteristic on finite CW complexes;
\item agrees with the homotopy cardinality on $p$-finite spaces (for all $p$);
\item satisfies the pushout property (3).
\end{itemize}

\noindent This partially answers Baez's question. However, $\chi$ also:
\begin{itemize}
\item does \emph{not} agree with the homotopy cardinality on $\pi_\ast$-finite spaces with torsion of two different primes (for example, $\chi(B\mathbb{Z}/pq)=\frac{1}{p}+\frac{1}{q}-1$);
\item does \emph{not} satisfy the fibration property (4).
\end{itemize}

\noindent Everything that follows is a proof of Theorem \ref{Thm1}. The proof is remarkable for being very explicit. In Section 2, we restrict attention to one prime and use the Sullivan Conjecture (also known as Miller's Theorem) to construct a rich family of `$p$-complete Euler characteristics' $\chi_K$, one for each $p$-finite space $K$ (Proposition \ref{Prop}). They have the property that $\chi_K(K^\prime)$ measures the number of homotopy classes of maps $K\rightarrow K^\prime$. We also state the $p$-complete analogues of our main results: these are Theorem \ref{Thm2} and Corollary \ref{Cor2}.

Section 3 contains the main proof. In the heart of the proof, we use the Euler characteristics defined in Section 2 to construct (for any $p$-finite $K$) an indicator Euler characteristic $\delta_K$ satisfying $\delta_K(K)>0$ and $\delta_K(K^\prime)=0$ for any $K^\prime$ which is `no larger' than $K$. The rest of the proof is mostly formal. We construct $\delta_K$ by a special sort of induction on the `size' of the $p$-finite space: First, we consider the $k$-quotients $S$ of $K$ ($k$ varies), in a sense we make precise, starting with $\chi_K$. We iteratively kill off $\chi_K(S)$ for each $k$-quotient $S$ using the principle of inclusion-exclusion (Mobius inversion), then iterate again over all $k$.

This is a general technique for handling $\pi_\ast$-finite spaces iteratively, which the author has found very robust. We hope it will have future applications.

Then in Section 4, we assemble all primes together via another formal argument.

\subsection{Acknowledgment and related work}
\noindent In work which is not yet public, Schlank and Yanovski \cite{SY} have independently constructed (what we call) $p$-complete Euler characteristics via a different method which passes through Morava K-theory. The author is thankful to them for pointing out that our $p$-complete argument in Sections 2 and 3 cannot work globally.

This paper fits into a body of literature on generalized Euler characteristics and finiteness obstructions, in which Mobius inversion and inclusion-exclusion play a frequent role. Notably, Fiore, L\"{u}ck, and Sauer \cite{FLS1} \cite{FLS2} have studied an L2 Euler characteristic which has features of the Baez-Dolan homotopy cardinality and the related Leinster Euler characteristics \cite{Leinster}, and Mobius inversion plays a role in formulas for Euler characteristics of homotopy colimits, as in \cite{Berman} \cite{PS1} \cite{PS2}.

\section{Characteristic functions on $p$-complete spaces}
\subsection{Characteristic functions and $K_0$}
\noindent We begin with a brief discussion of the characteristic functions we will be studying:

\begin{definition}
Suppose $\mathcal{C}$ is an $\infty$-category which admits finite colimits. (Equivalently, it has an initial object $\emptyset$ and homotopy pushouts.) A \emph{characteristic function} on $\mathcal{C}$ is a function $f$ from equivalence classes of objects in $\mathcal{C}$ into some abelian group $A$ such that:
\begin{enumerate}
\item $f(\emptyset)=0$;\setcounter{enumi}{2}
\item if $D\cong B\,\cup_A^hC$ is a homotopy pushout, $f(A)+f(D)=f(B)+f(C)$.
\end{enumerate}
\end{definition}

\noindent For fixed target group $A$, characteristic functions can be added and therefore they form an abelian group $\text{Char}(\mathcal{C},A)$. In fact, there is a universal characteristic function $\chi:\mathcal{C}\rightarrow K_0(\mathcal{C})$, in the sense of a natural isomorphism $$\text{Char}(\mathcal{C},A)\cong\text{Hom}(K_0(\mathcal{C}),A).$$ The abelian group $K_0(\mathcal{C})$ can be described explicitly: it has generators $\chi(X)$ for each $X\in\mathcal{C}$ which is not initial and relations for each homotopy pushout square as in (3).

\begin{remark}
\label{Rmk}
More generally, if $f_i:\mathcal{C}\rightarrow A$ ($i\in I$) is a set of characteristic functions, possibly infinite, such that only finitely many $f_i(X)$ are nonzero for any fixed $X\in\mathcal{C}$, then $\sum_if_i$ is a characteristic function as well.
\end{remark}

\begin{example}
If $\text{Top}^\omega$ denotes finite cell complexes, then $K_0(\text{Top}^\omega)\cong\mathbb{Z}$, and the universal characteristic function is the Euler characteristic (up to a choice of sign).
\end{example}

\subsection{Completion at a prime}
\noindent Now we restrict attention to one prime at a time.

\begin{definition}
A space is \emph{$p$-finite} if it is connected, every homotopy group is a finite $p$-group, and every homotopy group above a certain degree is $0$. (That is, it is a $\pi_\ast$-finite space whose homotopy groups are $p$-groups.) The $\infty$-category thereof is $\text{Top}_p^\text{fin}$.

A \emph{$p$-profinite} space is a Pro-object in $\text{Top}_p^\text{fin}$. The $\infty$-category is $\text{Top}_p^\wedge$.

Finally, a $p$-profinite space is \emph{of rational characteristic} if it is a finite colimit (in $\text{Top}_p^\wedge$) of $p$-finite spaces. The $\infty$-category is $\text{Top}_p^\text{rx}$.
\end{definition}

\noindent In this section and the next, we will prove the following $p$-complete analogues of Theorem \ref{Thm1} and Corollary \ref{Cor1}:

\begin{theorem}
\label{Thm2}
Fix a prime $p$. If $f$ is any function from equivalence classes of $p$-finite spaces to an abelian group $A$, then $f$ extends uniquely to a characteristic function on $\text{Top}_p^\text{rx}$.
\end{theorem}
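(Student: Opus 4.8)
The plan is to prove Theorem \ref{Thm2} by constructing enough characteristic functions on $\text{Top}_p^\text{rx}$ to separate the $p$-finite spaces, and then combining them. Equivalently, I want to show that $K_0(\text{Top}_p^\text{rx})$ is a free abelian group on the set of equivalence classes of $p$-finite spaces. There are two directions. For the \emph{surjectivity/spanning} direction — every object of $\text{Top}_p^\text{rx}$ has a class which is a $\mathbb{Z}$-linear combination of $p$-finite classes — this is nearly definitional: by construction every $p$-profinite space of rational characteristic is a finite colimit of $p$-finite spaces, and finite colimits are built from the initial object and iterated pushouts, so the pushout relation (3) lets us rewrite $\chi(X)$ in terms of $\chi$ of $p$-finite spaces (disjoint unions, the relevant coproduct here, being the pushout under $\emptyset$). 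The real content is \emph{freeness}: I must produce, for each $p$-finite space $K$, a homomorphism $K_0(\text{Top}_p^\text{rx})\to\mathbb{Z}$, i.e. a characteristic function $\delta_K$, which is nonzero on $\chi(K)$ but vanishes on $\chi(K')$ for all other $p$-finite $K'$ — or at least for all $K'$ "no larger" than $K$, so that a triangular/inductive argument over the "size" of $p$-finite spaces lets me conclude linear independence.

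The key input, as the introduction signals, is the Sullivan Conjecture (Miller's theorem): for $p$-finite $K$, the rule $K'\mapsto \#\,\pi_0\,\mathrm{Map}(K,K')$ — counting homotopy classes of maps, using that mapping spaces between $p$-finite spaces are again $p$-finite and in particular have finite $\pi_0$ — should extend to a characteristic function $\chi_K$ on $\text{Top}_p^\text{rx}$; more precisely I expect $\chi_K(K') = \chi^{\mathrm{h.c.}}(\mathrm{Map}(K,K'))$, the homotopy cardinality of the mapping space, which makes the pushout identity $\chi_K(A)+\chi_K(D)=\chi_K(B)+\chi_K(C)$ follow from the fact that $\mathrm{Map}(K,-)$ sends the pushout $D\cong B\cup_A^h C$ to the pullback $\mathrm{Map}(K,D)\cong \mathrm{Map}(K,B)\times_{\mathrm{Map}(K,A)}^h\mathrm{Map}(K,C)$ of $p$-finite spaces, together with multiplicativity of homotopy cardinality along fibrations (the product formula for $\pi_\ast$-finite fiber sequences). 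One must check finiteness/convergence of this construction on all of $\text{Top}_p^\text{rx}$ (not just on $p$-finite targets) — this is where Pro-objects and Miller's theorem do the work — and that only finitely many such $\chi_K$ are nonzero on a fixed $X$, so that infinite sums of them remain characteristic functions (Remark \ref{Rmk}).

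With the $\chi_K$ in hand, the heart of the argument is the inductive construction of the indicator functions $\delta_K$. I would order $p$-finite spaces by a notion of "size" — something like the total number of cells in a minimal $p$-finite Postnikov presentation, or lexicographically by the tuple of orders of homotopy groups — so that the poset of spaces "no larger than $K$" is finite. Then $\chi_K(K)\geq 1$ (the identity map is counted), and for $K'$ strictly smaller there are finitely many maps $K\to K'$, each factoring through a "$k$-quotient" $S$ of $K$ in the sense alluded to in the introduction. Using Möbius inversion / inclusion–exclusion over the finite poset of these quotients $S$, indexed and then iterated over the degree $k$, one subtracts off $\mathbb{Z}$-linear combinations of the $\chi_S$ to successively annihilate the contribution of each proper quotient, arriving at $\delta_K$ with $\delta_K(K')=0$ for all $K'$ not larger than $K$ and $\delta_K(K)\neq 0$. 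Feeding this triangular system into a standard argument shows the classes $\chi(K)$ are $\mathbb{Z}$-linearly independent in $K_0(\text{Top}_p^\text{rx})$, and combined with spanning gives freeness, hence the unique-extension statement of Theorem \ref{Thm2}. The main obstacle I anticipate is getting the bookkeeping of the "$k$-quotient" poset exactly right — defining it so that it is finite, so that every map out of $K$ is accounted for by exactly one Möbius stratum, and so that the induction on $k$ genuinely terminates; the analytic inputs (Miller's theorem, finiteness of mapping spaces, multiplicativity of homotopy cardinality) are essentially off-the-shelf, but organizing the inclusion–exclusion into a correct triangular system over $\pi_\ast$-finite spaces is the delicate combinatorial core.
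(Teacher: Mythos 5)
Your overall architecture matches the paper's: spanning is essentially definitional, and the real work is producing, for each $p$-finite $K$, a characteristic function $\chi_K$ restricting to $K'\mapsto|\pi_0\mathrm{Map}(K,K')|$ on $p$-finite spaces, then running a M\"obius-inversion/triangular argument over posets of quotients. But there is a genuine gap at the foundational step: your proposed mechanism for why $\chi_K$ is a characteristic function does not work. You define $\chi_K(X)$ as the homotopy cardinality of $\mathrm{Map}(K,X)$ and argue that additivity under pushouts follows because $\mathrm{Map}(K,-)$ sends the pushout $D\cong B\cup_A^h C$ to a pullback, together with multiplicativity of homotopy cardinality along fibrations. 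Those two facts yield \emph{multiplicative} relations among the values $\chi_K(A),\chi_K(B),\chi_K(C),\chi_K(D)$, not the additive relation $\chi_K(A)+\chi_K(D)=\chi_K(B)+\chi_K(C)$ that a characteristic function must satisfy; homotopy cardinality is not additive under homotopy pullback in the required sense. You have also conflated two different quantities: $|\pi_0\mathrm{Map}(K,K')|$ and the homotopy cardinality of $\mathrm{Map}(K,K')$ disagree already for $K=\ast$, $K'=B\mathbb{Z}/p$ (the former is $1$, the latter $1/p$), and the homotopy cardinality of $\mathrm{Map}(K,X)$ is not even defined for general $X\in\mathrm{Top}_p^{\mathrm{rx}}$ (e.g.\ $X=(S^1)^\wedge_p$ has $\mathrm{Map}(\ast,X)\simeq B\mathbb{Z}_p$ with infinite $\pi_1$). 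The M\"obius-inversion step genuinely needs the map-counting quantity $|\pi_0|$, not a cardinality weighted by higher homotopy.

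The paper's actual mechanism is different, and it is the real content of the Sullivan Conjecture as used here: Lurie's theorem says $\mathrm{Map}(K,-)$ is \emph{right exact} on $p$-profinite spaces, i.e.\ it \emph{preserves} finite colimits (it does not merely send them to limits). One then needs a base characteristic function to compose with: the paper constructs a left adjoint $L$ to the inclusion $\mathrm{Top}_p^\omega\hookrightarrow\mathrm{Top}_p^{\mathrm{rx}}$ (with $L(K)\simeq\ast$ for $p$-finite $K$, again by Miller--Lurie), sets $\chi_\ast=\chi\circ L$ using the ordinary mod-$p$ Euler characteristic, and defines $\chi_K=\chi_\ast\circ\mathrm{Map}(K,-)$; this is a characteristic function because it is a right-exact functor followed by a characteristic function, and it returns $|\pi_0\mathrm{Map}(K,K')|$ on $p$-finite $K'$ because each component of the mapping space is $p$-finite and hence killed by $L$. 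Without this (or the Schlank--Yanovski route through Morava K-theory) your $\chi_K$ does not get off the ground, and the inductive construction of the $\delta_K$ has nothing to run on. A secondary point you would also need to address: the triangular argument a priori produces coefficients in $A\otimes\mathbb{Q}$, so the paper first treats free $A$ and then lifts along a surjection $\mathbb{Z}^n\twoheadrightarrow A$ for the general case.
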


\begin{corollary}
\label{Cor2}
The $K$-theory of $\text{Top}^\text{rx}_p$ is $$K_0(\text{Top}^\text{rx}_p)\cong\bigoplus_{\text{Top}_p^\text{fin}}\mathbb{Z},$$ where the product is taken over all equivalence classes of finite $p$-spaces.
\end{corollary}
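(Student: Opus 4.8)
The plan is to prove Theorem \ref{Thm2} directly, since Corollary \ref{Cor2} is merely the restatement that $K_0(\text{Top}_p^\text{rx}) \cong \bigoplus_{\text{Top}_p^\text{fin}}\mathbb{Z}$. Uniqueness is essentially formal: every object of $\text{Top}_p^\text{rx}$ is by definition a finite colimit of $p$-finite spaces, and a finite colimit can be decomposed into an initial object, finite coproducts, and iterated pushouts; properties (1) and (3) (together with the derived additivity on coproducts, which follows by taking $A = \emptyset$ in a pushout) then force the value of any characteristic extension on such a colimit to be a $\mathbb{Z}$-linear combination of the values on $p$-finite pieces. So the content is \emph{existence}: given arbitrary $f\colon \text{Top}_p^\text{fin} \to A$, we must produce a genuine characteristic function on $\text{Top}_p^\text{rx}$ restricting to $f$. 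Equivalently, we must show the $p$-finite spaces are ``linearly independent'' in $K_0$, i.e. that the evident map $\bigoplus_{\text{Top}_p^\text{fin}}\mathbb{Z} \to K_0(\text{Top}_p^\text{rx})$ sending a generator to $\chi(K)$ is injective (surjectivity again being the colimit-decomposition argument).

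To prove injectivity it suffices, for each $p$-finite $K$, to construct an ``indicator'' characteristic function $\delta_K \colon \text{Top}_p^\text{rx} \to \mathbb{Z}$ with $\delta_K(K) \neq 0$ and $\delta_K(K') = 0$ for all $p$-finite $K'$ lying strictly below $K$ in a suitable well-founded ordering; feeding the family $\{\delta_K\}$ into the universal property and invoking Remark \ref{Rmk} (local finiteness of the family) then splits off each generator. The raw material for building $\delta_K$ is the family of $p$-complete Euler characteristics $\chi_K$ from Proposition \ref{Prop}, where $\chi_K(K')$ counts homotopy classes of maps $K \to K'$; these exist precisely because the Sullivan/Miller theorem guarantees the mapping-space construction is suitably finite and additive after $p$-completion. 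The point is that $\chi_K(K')$ detects $K'$ ``from above,'' so to isolate $K$ itself we need to strip away the contributions of all smaller spaces that $K$ maps to.

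The technical heart — and the step I expect to be the main obstacle — is the inductive construction of $\delta_K$ from the $\chi_K$'s by Möbius inversion. The idea sketched in the introduction is to filter $p$-finite spaces by ``size'' (a lexicographic or multi-index invariant built from the orders of the homotopy groups and perhaps their Postnikov height), and within a fixed size to run inclusion–exclusion over the ``$k$-quotients'' $S$ of $K$ — quotients obtained by collapsing part of the $k$-th Postnikov layer — iteratively subtracting off multiples of $\chi_S$ to kill $\chi_K(S)$ for each such $S$, then iterating over $k$. Making this precise requires: (a) pinning down the right notion of $k$-quotient so that the set of spaces one must correct for is finite at each stage and the induction is well-founded; (b) checking that each intermediate function remains a $\mathbb{Z}$-linear combination of the $\chi_S$'s, hence still a characteristic function by Remark \ref{Rmk}; and (c) verifying the combinatorial bookkeeping — that after all corrections the resulting $\delta_K$ vanishes on everything below $K$ while $\delta_K(K)$ retains a nonzero value (the ``diagonal'' term of the Möbius matrix). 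The delicacy is entirely in (a) and (c): ensuring the partial order on $p$-finite spaces is such that ``maps out of $K$'' only hit spaces that are either equivalent to $K$ or genuinely smaller, so that the triangular system one sets up is actually invertible over $\mathbb{Z}$.

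Once $\delta_K$ is in hand for every $p$-finite $K$, the rest is formal: the map $\bigoplus \mathbb{Z} \to K_0(\text{Top}_p^\text{rx})$ is injective because the $\delta_K$ separate generators, and surjective because $\text{Top}_p^\text{rx}$ is generated under finite colimits by $p$-finite spaces; combining with uniqueness yields Theorem \ref{Thm2} and hence Corollary \ref{Cor2}.
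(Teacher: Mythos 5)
Your outline matches the paper's strategy almost exactly: the paper deduces Corollary \ref{Cor2} from Theorem \ref{Thm2} by a one-line Yoneda argument, and proves Theorem \ref{Thm2} by precisely the route you describe --- the characteristic functions $\chi_K$ of Proposition \ref{Prop} (via Lurie's form of the Sullivan conjecture), a total preorder $\lessapprox$ on $p$-finite spaces defined by comparing homotopy groups from the top degree down, and a downward induction with M\"obius inversion over posets of $n$-surjections $K\to S$, producing indicator functions $\delta_K^0$ with $\delta_K^0(K)\neq 0$ and $\delta_K^0(K')=0$ for $K'<K$. However, you have explicitly deferred the entire content of that induction (your items (a) and (c)); this is the paper's Lemma \ref{Lem1} and Lemma \ref{Lem2}, and it is where all the work lies (in particular the unique factorization of an $i$-truncated map as an $i$-surjection followed by an $(i-1)$-truncated map, and the non-injectivity of the restriction maps $P_T^n\to P_K^n$ that forces a modified M\"obius inversion). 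As written, the proposal is a correct plan rather than a proof.

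One concrete claim in your sketch is wrong and worth correcting: the triangular system is \emph{not} invertible over $\mathbb{Z}$. The diagonal entry $\delta_K^0(K)$ is the number of homotopy classes of self-equivalences of $K$, i.e.\ $|\pi_0\mathrm{hAut}(K)|$, which is a positive integer but generally not $\pm 1$ (already $\delta^0_{B\mathbb{Z}/p}(B\mathbb{Z}/p)=p-1$). For Corollary \ref{Cor2} itself this is harmless: injectivity of $\bigoplus_{\text{Top}_p^\text{fin}}\mathbb{Z}\to K_0(\text{Top}_p^\text{rx})$ only requires each $\delta_{K}^0(K)$ to be nonzero, since evaluating $\delta^0_{K_m}$ at a maximal element $K_m$ of the finite support of a putative relation forces its coefficient to vanish, and your direct injectivity-plus-surjectivity argument is then a slightly leaner route to the corollary than the paper's. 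But for the full Theorem \ref{Thm2} with an arbitrary target $A$, the non-unit diagonal matters: the paper solves the triangular system in $A\otimes\mathbb{Q}$, argues a posteriori that the resulting characteristic function takes values in $A$ when $A$ is torsion-free (because every value on $\text{Top}_p^\text{rx}$ is an integral combination of values on $p$-finite spaces), and then reduces arbitrary $A$ to the free case by lifting along a surjection from a free abelian group. If you intend to recover Theorem \ref{Thm2} and not just the corollary, you need this extra step.
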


\subsection{The characteristic functions $\chi_K$}
\noindent We have $p$-completed in order to use the following result of Lurie:

\begin{theorem}[Lurie \cite{Lurie}]
\label{ThmLurie}
If $K$ is a $p$-finite space, then the functor $$\text{Map}(K,-):\text{Top}_p^\wedge\rightarrow\text{Top}$$ factors through $\text{Top}_p^\wedge$ and is right exact (preserves finite colimits).
\end{theorem}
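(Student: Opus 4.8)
The plan is to prove the two assertions --- that $\mathrm{Map}(K,-)$ takes values in $\mathrm{Top}_p^\wedge$, and that it is right exact --- separately, reducing in each case along Postnikov towers to Eilenberg--MacLane spaces and ultimately to $K=B\mathbb{Z}/p$, where the Sullivan conjecture supplies the one genuinely nonformal input.

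For the factorization through $\mathrm{Top}_p^\wedge$, I would use that $\mathrm{Top}_p^\wedge=\mathrm{Pro}(\mathrm{Top}_p^{\mathrm{fin}})$ and that $\mathrm{Map}(K,-)$ preserves cofiltered limits to reduce to showing that $\mathrm{Map}(K,L)$ is $p$-finite whenever $L$ is. Applying $\mathrm{Map}(K,-)$ to a fibre sequence $L'\to L\to L''$ of $p$-finite spaces gives a fibre sequence of mapping spaces, so a Postnikov induction on $L$ reduces to $L=K(M,n)$ for $M$ a finite abelian $p$-group. There $\mathrm{Map}(K,K(M,n))$ is a product of spaces $K(H^{n-j}(K;M),j)$ for $0\le j\le n$ (with local coefficients if $\pi_1 K$ acts nontrivially), and this product is \emph{finite} because $K$ is connected; so it is $p$-finite as soon as every $H^i(K;M)$ is finite. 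That last fact is itself a Postnikov induction on $K$: the Serre spectral sequence of $K(\pi_n K,n)\to\tau_{\le n}K\to\tau_{\le n-1}K$ has degreewise finite $E_2$-page --- by the inductive hypothesis and the classical finiteness of $H^\ast(K(\pi_n K,n);\mathbb{F}_p)$ in each degree --- and converges to $H^\ast(\tau_{\le n}K;M)$, with base case $H^\ast(\ast;M)$.

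For right exactness, the previous step lets us regard $\mathrm{Map}(K,-)$ as a limit-preserving endofunctor of $\mathrm{Top}_p^\wedge$, and such a functor is right exact precisely when it preserves the initial object and pushouts. The initial object is immediate, so the content lies in pushouts. I would consider the class $\mathcal{K}$ of $p$-finite spaces $K$ for which $\mathrm{Map}(K,-)$ preserves pushouts, and show: $\ast\in\mathcal{K}$ trivially; $B\mathbb{Z}/p\in\mathcal{K}$ --- this is exactly the Sullivan conjecture (Miller's theorem), in a form valid for $p$-complete targets, which gives enough control over $\mathrm{Map}(B\mathbb{Z}/p,X)$ to commute it past a pushout; and $\mathcal{K}$ is closed under the operations that build every $p$-finite space from $\ast$ and $B\mathbb{Z}/p$. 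Those operations are: passing from $K(A,n)$ to $K(A,n+1)=BK(A,n)$, which presents the new source as a geometric realization of a bar construction on the old one; extending along a short exact sequence $0\to A'\to A\to A''\to 0$, which presents $K(A,n)$ as the colimit over $K(A'',n)$ of the local system with value $K(A',n)$; and assembling a general $p$-finite $K$ from the principal fibrations $K(\pi_n K,n)\to\tau_{\le n}K\to\tau_{\le n-1}K$ of its Postnikov tower. In each case the claim becomes the statement that forming such a colimit --- indexed by an $\infty$-category that is itself $p$-finite --- commutes with pushouts in $\mathrm{Top}_p^\wedge$, and the induction bottoms out at Miller's theorem.

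The hard part is precisely this inductive scheme and its base case. Miller's theorem is deep and must be used in a version valid for $p$-complete rather than merely finite-dimensional targets; and the reductions above are delicate because the fibration sequences in play carry nontrivial monodromy, so the colimits presenting $K(A,n)$ and $\tau_{\le n}K$ are genuinely twisted, not products, and one must check with care that $\mathrm{Map}(-,X)$ converts them as expected and that the ensuing limits over $p$-finite indexing categories interact correctly with pushouts. By comparison, the finiteness assertion is routine once one knows that $p$-finite spaces have degreewise-finite mod $p$ cohomology.
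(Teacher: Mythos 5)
The paper does not prove this statement: it is quoted from Lurie's course notes on the Sullivan Conjecture \cite{Lurie}, and the sentence immediately following the theorem says exactly that. So there is no internal proof to compare yours against. Measured against the standard argument (which is what Lurie's notes carry out), your outline follows the right route: show $\text{Map}(K,L)$ is a finite disjoint union of $p$-finite spaces for $L$ $p$-finite by Postnikov induction plus the degreewise finiteness of $H^\ast(K(\pi,n);\mathbb{F}_p)$, then extend over $\text{Pro}(\text{Top}_p^{\text{fin}})$; and prove preservation of pushouts by building every $p$-finite space from $\ast$ and $B\mathbb{Z}/p$ via twisted colimits over $p$-finite groupoids, with the Sullivan conjecture as the one nonformal base case.

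That said, what you have written is an outline, and the two steps you yourself flag as ``delicate'' are precisely where all the content lives, so they cannot be deferred. First, the $p$-complete form of Miller's theorem --- that $\text{Map}(B\mathbb{Z}/p,-)$ preserves finite colimits of $p$-profinite spaces --- does not follow formally from the classical statement about finite-dimensional targets; setting it up and proving it (via Lannes' $T$-functor or the pro-categorical formalism) is most of Lurie's two lectures. Second, the assertion that limits over $p$-finite $\infty$-groupoids commute with pushouts in $\text{Top}_p^\wedge$ is false in $\text{Top}$ and is not a formal consequence of the base case; it is proved by the same induction and is essentially the theorem in disguise, so invoking it as a ``closure property'' is circular unless the induction is actually run. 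Two smaller slips: the product $\prod_{j\le n}K(H^{n-j}(K;M),j)$ is finite because $K(M,n)$ is truncated, not because $K$ is connected (connectivity is what you need to control the components and the coefficient twisting); and your list of building operations only produces spaces with abelian fundamental group --- for a general finite $p$-group $\pi_1K$ you must also use a central series of $\pi_1K$ to exhibit $B\pi_1K$ as an iterated fibration of $B\mathbb{Z}/p$'s.
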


\noindent This theorem is really Lurie's version of the Sullivan Conjecture, which extends Miller's original proof \cite{Miller}.

Our proof strategy for Theorem \ref{Thm2} consists of constructing a family $\chi_K$ of characteristic functions on $\text{Top}^\text{rx}_p$, one for each $p$-finite space $K$, then showing this family is so rich that we can build out of them \emph{any} characteristic function we want. First, we will describe this family:

\begin{proposition}
\label{Prop}
For any $p$-finite space $K$, there is a characteristic function $\chi_K:\text{Top}_p^\text{rx}\rightarrow\mathbb{Z}$ such that, for any other $p$-finite space $K^\prime$, $$\chi_K(K^\prime)=|\pi_0\text{Map}(K,K^\prime)|.$$
\end{proposition}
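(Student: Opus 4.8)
The plan is to define $\chi_K$ as the composite of the universal characteristic function with a suitable homomorphism, exploiting the fact that $\text{Map}(K,-)$ is right exact.

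\medskip

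\noindent\textbf{Proof proposal.} First I would observe that, by Theorem \ref{ThmLurie}, the functor $\text{Map}(K,-)$ restricts to a right-exact functor $\text{Top}_p^\text{rx}\to\text{Top}_p^\text{rx}$: it preserves finite colimits, and a finite colimit of $p$-finite spaces is by definition a rational characteristic space, so the image of $\text{Top}_p^\text{rx}$ lands back in $\text{Top}_p^\text{rx}$. (One should check $\text{Map}(K,K')$ is itself $p$-finite when $K,K'$ are, which again is part of Lurie's theorem — the functor factors through $\text{Top}_p^\wedge$, and being a finite colimit of the point it is rational characteristic; in fact it is $p$-finite by Lurie's finiteness statement.) Now compose with the universal characteristic function $\chi:\text{Top}_p^\text{rx}\to K_0(\text{Top}_p^\text{rx})$. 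Because $\text{Map}(K,-)$ sends $\emptyset$ to a point\,---\,wait, that is wrong: $\text{Map}(K,\emptyset)$ need not be empty. This is the crux of the matter, so let me restructure.

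\medskip

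\noindent The honest approach: I would first build, for each $p$-finite $K$, a characteristic function valued in $K_0(\text{Top}_p^\text{rx})$ by the formula $X\mapsto \chi(\text{Map}(K,X))$, \emph{except} that right-exactness of $\text{Map}(K,-)$ only guarantees preservation of pushouts, not of the initial object. So instead I would use the reduced version. Since $\text{Top}_p^\text{rx}$ is pointed (every object receives a map from $\ast$? no — $\emptyset$ is initial, $\ast$ is not terminal in the Pro-category in the naive sense, but $\ast$ is a zero object relative to connected spaces)\,---\,here one must be careful, but the key point is that $\text{Map}(K,\ast)=\ast$ since $K$ is connected, so $\text{Map}(K,-)$ does preserve the point. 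The subtlety is $\emptyset$. To handle this, define $g_K(X) := \chi(\text{Map}(K,X)) - \chi(\ast)\cdot[\text{something tracking }\pi_0]$; more cleanly, note that any $X\in\text{Top}_p^\text{rx}$ is a finite colimit of connected $p$-finite spaces, and track the ``number of components'' characteristic function $c(X)$ (which is $1$ on every $p$-finite space and extends by additivity). Then I claim $X\mapsto \chi(\text{Map}(K,X))$ differs from a genuine characteristic function by a correction involving $c$. Concretely: both $\text{Map}(K,-)$ applied to a pushout square and the constant-on-$\emptyset$ issue are governed by the fact that $\text{Map}(K,-)$ preserves pushouts but $\text{Map}(K,\emptyset)\neq\emptyset$; so the function $f_K(X) = \chi(\text{Map}(K,X)) - \chi(\text{Map}(K,\emptyset))\cdot c(X)$ is a characteristic function (the additive defects cancel because in a pushout $D = B\cup_A^h C$ one has $c(A)+c(D)=c(B)+c(C)$ too, as $c$ is itself a characteristic function once we know $\text{Map}(K,\emptyset)$ has finitely many components). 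Finally, post-compose with the homomorphism $K_0(\text{Top}_p^\text{rx})\to\mathbb Z$ induced by $\pi_0$ (number of path components), to land in $\mathbb Z$, and verify that on a $p$-finite space $K'$ this evaluates to $|\pi_0\text{Map}(K,K')| - |\pi_0\text{Map}(K,\emptyset)|\cdot 1$.

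\medskip

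\noindent The remaining task is to pin down $\text{Map}(K,\emptyset)$ in $\text{Top}_p^\wedge$ and confirm the resulting constant is $0$, so that $\chi_K(K')=|\pi_0\text{Map}(K,K')|$ on the nose. Here I would argue that $\emptyset$ is the empty colimit, so by right-exactness $\text{Map}(K,\emptyset)$ is the initial object of $\text{Top}_p^\wedge$, which is again $\emptyset$; hence $|\pi_0\text{Map}(K,\emptyset)| = 0$ and no correction term is needed after all\,---\,$\text{Map}(K,-)$ genuinely preserves \emph{all} finite colimits including the empty one, which is exactly what ``right exact'' means. So the clean statement is: $\chi_K := (\pi_0\text{-count})\circ\chi\circ\text{Map}(K,-)$ works, where $\pi_0\text{-count}:K_0(\text{Top}_p^\text{rx})\to\mathbb Z$ sends $\chi(X)$ to $|\pi_0 X|$ (well-defined since $|\pi_0(-)|$ is a characteristic function on $\text{Top}_p^\text{rx}$: it is additive over disjoint unions, vanishes on $\emptyset$, and satisfies the pushout relation because a pushout of connected spaces along a connected space is connected, while in general $\pi_0$ of a pushout is the pushout of $\pi_0$'s in sets, for which $|\pi_0|$ is the classical Euler characteristic identity). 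Then $\chi_K(K') = |\pi_0\text{Map}(K,K')|$ by construction.

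\medskip

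\noindent\textbf{Main obstacle.} I expect the only real subtlety is bookkeeping around the empty space and verifying that $|\pi_0(-)|$ descends to a homomorphism out of $K_0$; once right-exactness of $\text{Map}(K,-)$ (Theorem \ref{ThmLurie}) is in hand, the construction is a formal composite of characteristic functions and the heart of the work\,---\,the hard topology\,---\,has already been done by Lurie. The conceptual content is entirely that the Sullivan Conjecture makes $\text{Map}(K,-)$ preserve finite colimits on $p$-complete spaces.
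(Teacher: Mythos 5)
Your first step (using Theorem \ref{ThmLurie} to see that $\mathrm{Map}(K,-)$ is a right exact endofunctor of $\text{Top}_p^\text{rx}$, including $\mathrm{Map}(K,\emptyset)\simeq\emptyset$) matches the paper. The gap is in your last step: you post-compose with a homomorphism $K_0(\text{Top}_p^\text{rx})\to\mathbb{Z}$ ``induced by $|\pi_0(-)|$,'' asserting that $|\pi_0(-)|$ is a characteristic function because $\pi_0$ of a homotopy pushout is the pushout of $\pi_0$'s in sets. But the identity $|A|+|D|=|B|+|C|$ for a pushout of finite sets holds only when the maps out of $A$ are injective (inclusion--exclusion for unions); for a general pushout of sets (a quotient) it fails. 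Concretely, take $A=K\sqcup K$, $B=C=K$ with both legs the fold map: the homotopy pushout $D$ is (the $p$-completion of) $K\times S^1$, which is connected, so $|\pi_0A|+|\pi_0D|=3\neq 2=|\pi_0B|+|\pi_0C|$. Such squares live in $\text{Top}_p^\text{rx}$, so $|\pi_0(-)|$ does not satisfy the pushout relation and does not descend to $K_0$; your ``$\pi_0$-count'' homomorphism does not exist.

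The paper gets around exactly this point with Lemma \ref{Lem0}: it constructs $\chi_\ast=\chi\circ L$, where $L$ is the left adjoint to the inclusion $\text{Top}_p^\omega\hookrightarrow\text{Top}_p^\text{rx}$ and $\chi$ is the honest (mod $p$ homology) Euler characteristic on $p$-profinite finite complexes. The Sullivan Conjecture enters a second time here, to show $L(K)\simeq\ast$ for $p$-finite $K$; since $L$ is a left adjoint it is right exact, so $\chi_\ast$ is a genuine characteristic function. Crucially $\chi_\ast$ is \emph{not} $|\pi_0(-)|$ in general (on the $K\times S^1$ example above it gives $0$, as it must); it agrees with the component count only on truncated objects, i.e.\ disjoint unions of $p$-finite spaces. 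That is enough, because $\mathrm{Map}(K,K')$ is truncated when $K,K'$ are $p$-finite, which is the only place the formula $\chi_K(K')=|\pi_0\mathrm{Map}(K,K')|$ is asserted. So your outline needs the additional input of Lemma \ref{Lem0} (or some substitute for it) to replace the nonexistent $\pi_0$-count homomorphism; the rest of your argument then goes through as in the paper.
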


\begin{lemma}
\label{Lem0}
Let $\text{Top}_p^\omega$ denote the $\infty$-category of $p$-profinite finite CW complexes; that is, $p$-profinite spaces built from $\ast$ out of finite colimits. The fully faithful inclusion of $\infty$-categories $\text{Top}_p^\omega\rightarrow\text{Top}_p^\text{rx}$ admits a left adjoint $L$, and for any $p$-finite space $K$, $L(K)$ is contractible.
\end{lemma}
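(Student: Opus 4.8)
The plan is to produce the left adjoint $L$ abstractly and then identify its value on $p$-finite spaces. First I would observe that $\text{Top}_p^\omega$ is generated under finite colimits by a single object, the point $\ast$, since by definition it consists of those $p$-profinite spaces built from $\ast$ out of finite colimits. Thus a left adjoint $L:\text{Top}_p^\text{rx}\to\text{Top}_p^\omega$, if it exists, is forced: it must send $\ast$ to $\ast$ and preserve finite colimits, so $L$ is uniquely determined on all of $\text{Top}_p^\text{rx}$ (which is itself generated under finite colimits by the $p$-finite spaces) once we know its values on $p$-finite spaces. To establish existence, the cleanest route is to exhibit $\text{Top}_p^\omega$ as the reflective (colimit-)localization of $\text{Top}_p^\text{rx}$ at the maps $K\to\ast$ for $K$ $p$-finite, i.e. to show that the full subcategory of objects $Y$ such that $\text{Map}(\ast,Y)\to\text{Map}(K,Y)$ is an equivalence for every $p$-finite $K$ is precisely $\text{Top}_p^\omega$, and that the inclusion admits a left adjoint built by the usual small-object/iterated-pushout construction.

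The key input making the localization hypothesis plausible is Lurie's theorem (Theorem \ref{ThmLurie}): for $p$-finite $K$, the functor $\text{Map}(K,-)$ preserves finite colimits on $\text{Top}_p^\wedge$. Now compute $L(K)$ for $K$ $p$-finite by evaluating the unit $K\to L(K)$. For any object $Y\in\text{Top}_p^\omega$, i.e. built from $\ast$ by finite colimits, I want $\text{Map}(L(K),Y)\simeq\text{Map}(K,Y)$ to be $\text{Map}(\ast,Y)\simeq Y$; but since $Y$ is a finite colimit of copies of $\ast$ and $\text{Map}(K,-)$ is right exact, $\text{Map}(K,Y)$ is the corresponding finite colimit of copies of $\text{Map}(K,\ast)\simeq\ast$ — which recovers $Y$ itself. (Here I use that $K$ is connected and nonempty, so $\text{Map}(K,\ast)\simeq\ast$, and crucially that $K$ is $p$-finite so Lurie applies; a subtlety is that finite colimits in $\text{Top}_p^\wedge$ of points give back $p$-profinite finite CW complexes, which is exactly what $\text{Top}_p^\omega$ is.) Therefore the unit map $K\to L(K)$ induces an equivalence $\text{Map}(L(K),Y)\xrightarrow{\ \sim\ }\text{Map}(K,Y)$ for all $Y\in\text{Top}_p^\omega$, forcing $L(K)\simeq\ast$ by Yoneda in $\text{Top}_p^\omega$.

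I expect the main obstacle to be the careful construction of the left adjoint $L$ itself — verifying that the candidate localization is well-behaved (accessible/presentable enough for the adjoint functor theorem, or that the explicit transfinite construction terminates) in the pro-category $\text{Top}_p^\wedge$, which is not presentable in the naive sense. One clean workaround is to avoid global existence and instead only construct $L$ as a functor out of $\text{Top}_p^\text{rx}$ valued in $\text{Top}_p^\omega$ by defining it on generators ($p$-finite spaces $\mapsto \ast$, and more generally extending by right exactness), then checking directly that this respects the pushout relations — but this requires knowing that such a right-exact extension exists and is unique, which again reduces to a universal property of $\text{Top}_p^\text{rx}$ as the free finite-cocompletion of the $p$-finite spaces. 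Once the adjunction is in hand, the computation $L(K)\simeq\ast$ is the formal part and goes through as sketched, using only Theorem \ref{ThmLurie} and connectedness of $p$-finite spaces.
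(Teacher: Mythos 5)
Your computation of $L(K)$ for $p$-finite $K$ is exactly the paper's: write $Y\in\text{Top}_p^\omega$ as a finite colimit of points, use Theorem \ref{ThmLurie} to pull the colimit out of $\text{Map}(K,-)$, and conclude $\text{Map}(K,Y)\simeq Y\simeq\text{Map}(\ast,Y)$, so that $\ast$ satisfies the universal property of $L(K)$. Where you diverge is in how you propose to establish the existence of $L$ globally, and here you make the problem harder than it is. You reach for a reflective localization built by the small-object argument or the adjoint functor theorem, and then correctly worry that $\text{Top}_p^\wedge$ is a pro-category where presentability-based machinery does not obviously apply. The paper avoids this entirely by using the pointwise criterion for adjoints: it suffices to exhibit, for \emph{each} object $X\in\text{Top}_p^\text{rx}$, a universal arrow $X\to L(X)$ with $L(X)\in\text{Top}_p^\omega$. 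Writing $X\simeq\text{colim}_i K_i$ as a finite colimit of $p$-finite spaces, one takes $L(X)=\text{colim}_i(\ast)$ over the same diagram and checks $\text{Map}(L(X),Y)\simeq\text{Map}(X,Y)$ by pulling the finite colimit out of both mapping spaces --- the left side tautologically, the right side by Lurie's theorem. The base case of this verification is precisely your computation for $p$-finite $K$, so the ``main obstacle'' you anticipate dissolves: no transfinite construction, no accessibility hypotheses, and no appeal to $\text{Top}_p^\text{rx}$ being a free finite cocompletion are needed. Your second ``workaround'' gestures in this direction but leaves the required universal property of $\text{Top}_p^\text{rx}$ unproved, whereas the direct pointwise argument requires nothing beyond what you have already written down.
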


\begin{proof}
We need to show that for each rational characteristic $p$-profinite space $X$, there is a $p$-profinite finite CW complex $L(X)$ such that, for any other $p$-profinite finite CW complex $Y$, any map $X\rightarrow Y$ factors essentially uniquely through $L(X)$. That is, restriction induces a weak equivalence $$\text{Map}(L(X),Y)\rightarrow\text{Map}(X,Y).$$ If $X$ is $p$-finite, then this follows from Theorem \ref{ThmLurie}, with $L(X)\cong\ast$.

If $X\cong\text{colim}_i(K_i)$ is a finite colimit of $p$-finite spaces, $L(X)=\text{colim}_i(\ast)$ satisfies the same property $\text{Map}(L(X),Y)\cong\text{Map}(X,Y)$, since the colimit can be pulled out of both sides.
\end{proof}

\begin{proof}[Proof of Proposition \ref{Prop}]
First, we address the case $K=\ast$ is contractible. Let $L$ denote the left adjoint of Lemma \ref{Lem0}. Since it is left adjoint, it is right exact, and therefore $\chi_\ast=\chi\circ L:\text{Top}_p^\text{rx}\rightarrow\mathbb{Z}$ is a characteristic function, where $\chi$ denotes the ordinary Euler characteristic (through mod $p$ homology). In particular, any $p$-finite $K^\prime$ has $$\chi_\ast(K)=\chi(\ast)=1=|\pi_0\text{Map}(\ast,K^\prime)|,$$ as desired.

By Theorem \ref{ThmLurie}, the functor $\text{Map}(K,-):\text{Top}_p^\text{rx}\rightarrow\text{Top}_p^\text{rx}$ is right exact. Therefore, $\chi_K=\chi_\ast(\text{Map}(K,-))$ is a characteristic function on $\text{Top}_p^\text{rx}$.

If $K^\prime$ is another $p$-finite space, then $\text{Map}(K,K^\prime)$ has homotopy groups in bounded degree (is truncated). Therefore, it is a disjoint union of $p$-finite spaces, so $\chi_\ast$ measures the number of connected components. That is, $$\chi_K(K^\prime)=\chi_\ast(\text{Map}(K,K^\prime))=|\pi_0\text{Map}(K,K^\prime)|,$$ completing the proof.
\end{proof}

\section{Calculation of $p$-complete K-theory}
\noindent In this section we prove Theorem \ref{Thm2}. Our strategy is as follows: we place a total ordering on the set of $p$-finite spaces. Then, for each $p$-finite space $K$, we inductively construct characteristic functions $\delta_K^n$ with the following property (Lemma \ref{Lem2}): if $K^\prime\leq K$ is also $p$-finite, then $\delta_K^n(K^\prime)$ is the number of homotopy classes of maps $K\rightarrow K^\prime$ which are isomorphisms on $\pi_{\geq n}$.

In particular, $\delta_K^\infty=\chi_K$, and $\delta_K^0$ satisfies:
\begin{itemize}
\item $\delta_K^0(K)>0$;
\item $\delta_K^0(K^\prime)=0$ if $K^\prime<K$.
\end{itemize}

\noindent Then we can produce a characteristic function with \emph{any} prescribed values on $p$-finite spaces by taking a linear combination of these $\delta_K^0$ (Remark \ref{Rmk}).

\subsection{Quotients of $p$-finite spaces}
\noindent We begin by describing the ordering on $p$-finite spaces, as well as a related notion of $n$-surjection that we will need for the proof.

\begin{definition}
If $K,K^\prime$ are connected spaces, we say that $K^\prime\lessapprox K$ if there is some $n$ for which:
\begin{itemize}
\item $\pi_i K^\prime\cong\pi_i K$ for all $i>n$;
\item $\pi_n K^\prime\not\cong\pi_n K$;
\item $|\pi_n K^\prime|\leq|\pi_n K|$.
\end{itemize}
\noindent An \emph{$n$-surjection} is a map $K\rightarrow K^\prime$ which is an isomorphism on $\pi_{\neq n}$ and a surjection on $\pi_n$.
\end{definition}

\noindent Note that:
\begin{itemize}
\item if $K\rightarrow K^\prime$ is an $n$-surjection, then $K^\prime\lessapprox K$;
\item if there are $n$-surjections $K\xrightarrow{f} K^\prime\xrightarrow{g} K$, and either $K$ or $K^\prime$ is $p$-finite, then $f,g$ are both weak equivalences.
\end{itemize}

\noindent Therefore, if we fix a $p$-finite space $K$, there is a finite poset $P_K^n$ of (weak equivalence classes of) $n$-surjections $K\rightarrow T$, where $T\geq S$ in the poset means there is a homotopy coherent triangle of $n$-surjections $$\xymatrix{
&K\ar[ld]\ar[rd] &\\
T\ar[rr] &&S.
}$$ 

\begin{warning}
There may be multiple homotopy classes of $n$-surjections $T\rightarrow S$ realizing the inequality $T\geq S$ in $P_K^n$. In other words, restriction along an $n$-surjection $K\rightarrow T$ induces a map $P_T^n\rightarrow P_K^n$ whose image is $\{S\in P_K^n|S\leq T\}$, but which is not necessarily injective.
\end{warning}

\noindent We will later need to apply the principle of inclusion-exclusion (Mobius inversion) to the posets $P_K^n$. However, because of the phenomenon described in the warning above, we will need to apply Mobius inversion in a slightly unconventional way. The following lemma guarantees this is possible.

\begin{lemma}[Mobius inversion]
\label{Lem1}
For each $p$-finite space $K$ and $n\geq 0$, it is possible to choose integers $\mu_K^n(T,S)$ defined for every pair \\$(T\in P_K^n,S\in P_T^n)$, satisfying the defining property: If $f,g:P_K^n\rightarrow\mathbb{Z}$ are two functions, then the following identities are equivalent: $$f(T)=\sum_{S\in P^n_T}g(S^\ast)$$ $$g(T)=\sum_{S\in P^n_T}\mu_K^n(T,S)f(S^\ast).$$ Here $S^\ast$ refers to the image of $S$ under the restriction map $P^n_T\rightarrow P^n_K$.
\end{lemma}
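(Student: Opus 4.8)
The plan is to deduce the lemma from ordinary M\"obius inversion over the finite poset $P_K^n$, the only new feature being that the multiplicities in the non-injective restriction maps must be tracked. For $T \in P_K^n$ write $r_T \colon P_T^n \to P_K^n$ for the restriction map (sending an $n$-surjection $T \to U$ to the composite $K \to T \to U$). By the Warning above, the image of $r_T$ is the down-set $\{S \in P_K^n : S \leq T\}$; for such $S$ set $c_{T,S} = |r_T^{-1}(S)| \geq 1$, and set $c_{T,S} = 0$ when $S \nleq T$. Since $g(U^\ast)$ depends only on the class $U^\ast \in P_K^n$, grouping the sum $\sum_{U \in P_T^n} g(U^\ast)$ by the value of $U^\ast$ shows that the first displayed identity of the lemma is exactly the linear system
$$f(T) = \sum_{S \in P_K^n} c_{T,S}\, g(S) \qquad (T \in P_K^n),$$
that is, $f = Cg$ for the square integer matrix $C = (c_{T,S})_{T,S \in P_K^n}$.

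The heart of the argument is that $C$ is unitriangular with respect to any linear extension of the order on $P_K^n$. It is supported on pairs $S \leq T$ by construction, so it suffices to show $c_{T,T} = 1$ for every $T$. An element of $r_T^{-1}(T)$ is (the class of) an $n$-surjection $u \colon T \to U$ whose composite with $K \to T$ represents the same class in $P_K^n$ as $K \to T$; in particular there is an equivalence $U \simeq T$, so $u$ is a surjection between finite $p$-groups $\pi_n T \twoheadrightarrow \pi_n U$ of the same order, hence an isomorphism on $\pi_n$ and (being already an isomorphism on $\pi_{\neq n}$) a weak equivalence. Thus $u$ represents the top element of $P_T^n$, namely $\mathrm{id}_T$, and $r_T^{-1}(T) = \{\mathrm{id}_T\}$. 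Since a unitriangular integer matrix is invertible over $\mathbb{Z}$ and $\leq$ is transitive, there is an integer matrix $N = (\nu_{T,S})$, again supported on pairs $S \leq T$, with $C N = N C = I$; equivalently, the relation $f = Cg$ is equivalent to $g = Nf$, i.e. to
$$g(T) = \sum_{S \in P_K^n} \nu_{T,S}\, f(S) \qquad (T \in P_K^n).$$

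It remains only to rewrite this inverse relation using coefficients indexed by pairs $(T \in P_K^n, U \in P_T^n)$. For each $T$ and each $S \leq T$, the fiber $r_T^{-1}(S)$ is nonempty, so we may pick a preimage $\widehat{S}_T \in P_T^n$; define $\mu_K^n(T, \widehat{S}_T) = \nu_{T,S}$ and $\mu_K^n(T, U) = 0$ for every $U \in P_T^n$ not arising this way. Then, once more using that $f(U^\ast)$ depends only on $U^\ast$,
$$\sum_{U \in P_T^n} \mu_K^n(T, U)\, f(U^\ast) = \sum_{S \leq T} \nu_{T,S}\, f(S) = g(T),$$
so each of the two displayed identities of the lemma is equivalent to $f = Cg$, hence to the other. (Any other distribution of the weight $\nu_{T,S}$ across the fiber $r_T^{-1}(S)$ would serve equally; concentrating it at one preimage keeps the coefficients integral with no further thought.)

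I expect the only genuine obstacle to be the unitriangularity claim $c_{T,T} = 1$ --- that a $p$-finite space admits no $n$-surjection onto (an equivalent copy of) itself except the identity --- which however follows immediately from the fact, already invoked in the two bullet points before the lemma, that a surjection between finite groups of equal order is an isomorphism. The rest is the bookkeeping of fibers of $r_T$ and the integrality of inverses of unitriangular matrices.
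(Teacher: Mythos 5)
Your proof is correct and follows essentially the same route as the paper's: count the fibers of the restriction map $P_T^n\to P_K^n$ to reduce the first identity to a unitriangular integer linear system, invert it over $\mathbb{Z}$ (the paper calls this step ``ordinary M\"obius inversion,'' which you make slightly more explicit), and then redistribute the resulting coefficients over the fibers --- the paper spreads $\bar\mu(T,S)$ arbitrarily over the preimages while you concentrate it at one chosen preimage, which is just a special case of the same bookkeeping. The key observation in both arguments is identical: an $n$-surjection from $T$ onto something equivalent to $T$ is a weak equivalence, giving $c_{T,T}=\delta(T,T)=1$.
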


\begin{proof}
If $T,S\in P_K^n$, let $\delta(T,S)$ denote the number of $S^\prime\in P_T^n$ which restrict to $S\in P_K^n$ after restriction along $K\rightarrow T$. Note that $\delta(T,T)=1$ since any $n$-surjection $T\rightarrow T$ is a weak equivalence. Also, $\delta(T,S)=0$ unless $S\leq T$ in $P_K^n$. Then the first identity above is $$f(T)=\sum_{S\leq T}\delta(T,S)g(S).$$ Applying ordinary Mobius inversion for a finite poset, there are constants $\bar{\mu}(T,S)$ for each $S\leq T$ in $P_K^n$, such that $$g(T)=\sum_{S\leq T}\bar{\mu}(T,S)f(S)$$ is equivalent to the first identity.

For any fixed $S\leq T$, let $S_1,\ldots,S_k$ be all the elements of $P_T^n$ which restrict to $S\in P_K^n$ upon restriction along the $n$-surjection $K\rightarrow T$. Define $\mu(T,S_i)$ to be arbitrary integers such that $$\sum_{i=1}^k\mu(T,S_i)=\bar{\mu}(T,S).$$ Then the second identity of the lemma is equivalent to $$g(T)=\sum_{S\leq T}\bar{\mu}(T,S)f(S),$$ so the lemma is proven.
\end{proof}

\subsection{The characteristic functions $\delta_K$}
\begin{lemma}
\label{Lem2}
Suppose that $K$ is $p$-finite and $\pi_{>n}K=0$. Iteratively define $\mathbb{Z}$-valued characteristic functions $\delta_K^i$ ($i\geq 0$) on $\text{Top}^\text{rx}_p$ by $$\delta_K^i=\chi_K,\text{when }i>n$$ $$\delta_K^i=\sum_{S\in P_K^i}\mu_K^i(K,S)\delta_S^{i+1},\text{when }i\leq n.$$ Suppose $K^\prime$ is also $p$-finite and \textbf{either} $K^\prime\lessapprox K$ or $K,K^\prime$ are isomorphic on $\pi_{>i}$. Then $\delta_K^i(K^\prime)$ is the number of homotopy classes of maps $K\rightarrow K^\prime$ which are $(i-1)$-truncated; that is, which are isomorphisms on $\pi_{>i}$ and injective on $\pi_i$.
\end{lemma}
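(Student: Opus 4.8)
The plan is to prove this by downward induction on $i$, starting from $i=n+1$ (where $\delta_K^{n+1}=\chi_K$ and the claim is exactly Proposition \ref{Prop}, since an isomorphism on $\pi_{>n+1}$ and injection on $\pi_{n+1}$ is automatic when $\pi_{>n}K=0$ --- well, more carefully, we should check the base case handles all $K'$ in the stated range, using that $\chi_K(K') = |\pi_0\mathrm{Map}(K,K')|$ counts \emph{all} homotopy classes of maps, and when $i > n$ every map $K \to K'$ with $K' \lessapprox K$ or $K,K'$ agreeing on $\pi_{>i}$ is vacuously $(i-1)$-truncated because the relevant higher homotopy constraints are empty). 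The inductive step assumes the statement for $i+1$ and all $p$-finite spaces in the appropriate range, and proves it for $i$.

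The heart of the inductive step is a bookkeeping identity for homotopy classes of maps $K \to K'$. First I would stratify maps $f : K \to K'$ by the image of the induced map on $\pi_i$: every map that is an isomorphism on $\pi_{>i}$ factors (essentially uniquely, up to the ambiguity the Warning flags) as an $i$-surjection $K \to T$ followed by a map $T \to K'$ which is injective on $\pi_i$, where $T$ ranges over $P_K^i$. This is where I would invoke the ordering hypothesis: I need to know that the relevant $T$ and the target $K'$ fall in the range where the inductive hypothesis for $\delta_T^{i+1}$ applies, i.e. either $K' \lessapprox T$ or they agree on $\pi_{>i}$; this follows from $K' \lessapprox K$ (so $K'$ is ``no larger'' than $K$, hence than each quotient $T$) together with the structure of $P_K^i$. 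So, writing $m_i(T,K')$ for the number of homotopy classes of $i$-truncated maps $K \to K'$ (isos on $\pi_{>i}$, injective on $\pi_i$) and $a(T,K')$ for homotopy classes of maps $T \to K'$ injective on $\pi_i$, summing over the fiber of the factorization gives $\chi_K(K') = \delta_K^{n+1}(K') = \sum_{S \in P_K^{i+1}} \cdots$ --- no, cleaner: I would set up the identity at level $i$ directly, namely $$\delta_K^{i+1}(K') \;=\; \sum_{S \in P_K^i} \delta_S^{i+1}(K')$$ read through the factorization, except the multiplicity issue means I must instead write it as a sum over $P_K^{i+1}$-data relating $\delta_K^{i+1}$ to the $\delta_S^{i+1}$ for $S \in P_K^i$, precisely in the form $f(K) = \sum_{S \in P_K^i} g(S^\ast)$ that Lemma \ref{Lem1} is designed to invert.

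Concretely: fix $K'$ and define $f, g : P_K^i \to \mathbb{Z}$ by $f(T) = \delta_T^{i+1}(K')$ and $g(T) = (\text{number of homotopy classes of } i\text{-truncated maps } K \to K' \text{ that}$ $\text{factor through the } i\text{-surjection } K \to T \text{ and no proper further } i\text{-quotient})$. By the factorization of maps-iso-on-$\pi_{>i}$ through $i$-surjections, together with the inductive hypothesis identifying $\delta_T^{i+1}(K')$ with homotopy classes of maps $T \to K'$ injective on $\pi_i$, one gets $f(T) = \sum_{S \in P_T^i} g(S^\ast)$: every map $T \to K'$ injective on $\pi_i$ is, when precomposed to $K$, an $i$-truncated map, and it factors through a unique minimal $i$-quotient. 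Then Lemma \ref{Lem1} yields $g(K) = \sum_{S \in P_K^i} \mu_K^i(K,S) f(S^\ast) = \sum_{S \in P_K^i}\mu_K^i(K,S)\,\delta_S^{i+1}(K') = \delta_K^i(K')$, and $g(K)$ is by definition the number of $i$-truncated maps $K \to K'$ factoring through $K \to K$ with no proper further $i$-quotient --- but an $i$-surjection $K \to K$ is a weak equivalence, so ``factoring through no proper $i$-quotient'' is the condition that the map $K \to K'$ is \emph{injective} on $\pi_i$ (it has trivial ``$\pi_i$-kernel'' beyond what any map must), which combined with iso on $\pi_{>i}$ is exactly $(i-1)$-truncated. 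That gives the claim. I would also record separately that $\delta_K^i$ is a characteristic function: it is a finite $\mathbb{Z}$-linear combination of the $\delta_S^{i+1}$, which are characteristic functions by the inductive hypothesis (ultimately linear combinations of the $\chi_S$ of Proposition \ref{Prop}), so Remark \ref{Rmk} applies.

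The main obstacle I expect is the factorization/multiplicity bookkeeping: making precise that every homotopy class of map $K \to K'$ that is an isomorphism on $\pi_{>i}$ factors through a \emph{well-defined} element of $P_K^i$ (its ``$\pi_i$-image'', built as a fiberwise Postnikov-type construction relative to the $(i+1)$-truncation), and that the count $g$ defined via ``minimal $i$-quotient'' really satisfies the triangular identity $f(T) = \sum_{S \in P_T^i} g(S^\ast)$ despite the same $S \in P_K^i$ possibly arising from several $S' \in P_T^i$ --- this is exactly the subtlety Lemma \ref{Lem1} was built to absorb, but threading $K'$ through it and checking the ordering hypotheses keep every $\delta$ in range requires care. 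Everything else (the base case, characteristic-function-hood, the final specialization to $\delta_K^0$) is formal.
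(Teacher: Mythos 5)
Your proposal follows essentially the same route as the paper: downward induction with base case Proposition \ref{Prop}, factorization of the maps that are isomorphisms on $\pi_{>i}$ into an $i$-surjection followed by an $(i-1)$-truncated map, the counting identity $f(T)=\sum_{S\in P_T^i}g(S^\ast)$, and inversion via Lemma \ref{Lem1} --- and the ``main obstacle'' you flag (well-definedness of the canonical factorization and its multiplicities) is exactly what the paper supplies by invoking the $(i-1)$-connected/$(i-1)$-truncated factorization system (\cite{HTT} 6.5.1 and 5.2.8.17). One local slip worth correcting: the inductive hypothesis identifies $\delta_T^{i+1}(K^\prime)$ with maps $T\rightarrow K^\prime$ that are isomorphisms on $\pi_{>i+1}$ and injective on $\pi_{i+1}$ (hence, under the ordering hypothesis, isomorphisms on all of $\pi_{>i}$ with \emph{no} condition on $\pi_i$), not with maps ``injective on $\pi_i$'' --- read literally your version would make every counted map already $(i-1)$-truncated and collapse the sum over $P_T^i$, but the rest of your argument clearly uses the correct reading.
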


\noindent This is the key technical lemma. It iteratively constructs a characteristic function $\delta_K^0$ with the property: if $K^\prime\lessapprox K$, then $\delta_K^0(K^\prime)$ is nonzero if and only if $K\cong K^\prime$. The intermediary functions $\delta_K^i$ are useful only in building $\delta_K^0$.

\begin{proof}
We will prove the lemma by downward induction on $i$.

When $i>n$, every map $K\rightarrow K^\prime$ is $(i-1)$-truncated because $K,K^\prime$ themselves are $(i-1)$-truncated, so the lemma follows from Proposition \ref{Prop}.

Now assume $i\leq n$. If $K,K^\prime$ do not have isomorphic $\pi_{>i+1}$, then $\delta_K^i(K^\prime)=\sum\mu_K^i(S)\delta_S^{i+1}(K^\prime)=0$, by the induction hypothesis. And indeed, there are no $(i-1)$-truncated maps $K\rightarrow K^\prime$, so the lemma conclusion is satisfied.

Therefore we may assume that $K,K^\prime$ are isomorphic on $\pi_{>i+1}$. Any map $K\xrightarrow{p} K^\prime$ factors $K\xrightarrow{c}S\xrightarrow{t}K^\prime$, where $c$ is $(i-1)$-connected and $t$ is $(i-1)$-truncated (\cite{HTT} 6.5.1), and this factorization is unique up to equivalence (\cite{HTT} 5.2.8.17). That is:
\begin{enumerate}
\item $c$ is an isomorphism on $\pi_{<i}$;
\item $t$ is an isomorphism on $\pi_{>i}$;
\item $\pi_iK\rightarrow\pi_iS\rightarrow\pi_i K^\prime$ factors the composite as a surjection followed by an injection.
\end{enumerate}
\noindent If $p$ itself is $i$-truncated, then it must be an isomorphism on $\pi_{>i}$ (because of the lemma's condition that either $K^\prime\lessapprox K$ or they have the same $\pi_{>i}$). Therefore, if $p$ is $i$-truncated, $c$ must also be an isomorphism on $\pi_{>i}$ and therefore an $i$-surjection.

In summary, any $i$-truncated map $K\xrightarrow{p}K^\prime$ factors (essentially uniquely) as an $i$-surjection followed by an $(i-1)$-truncated map. By the induction hypothesis (which describes $\delta_K^{i+1}$ as $i$-truncated maps), $$\delta_K^{i+1}(K^\prime)=\sum_{S\in P_K^i}\text{Tr}_{i-1}(S,K^\prime),$$ where $\text{Tr}_{i-1}(S,K^\prime)$ denotes the number of homotopy classes of $(i-1)$-truncated maps $S\rightarrow K^\prime$. In fact, for any $i$-surjection $K\rightarrow T$, it is still true that either $K^\prime\lessapprox T$ or $K^\prime,T$ are isomorphic on $\pi_{>i}$, so for the same reason we have $$\delta_T^{i+1}(K^\prime)=\sum_{S\in P_T^i}\text{Tr}_{i-1}(S,K^\prime).$$ Applying Mobius inversion (Lemma \ref{Lem1}) with $T=K$, we find $$\text{Tr}_{i-1}(K,K^\prime)=\sum_{S\in P_K^i}\mu_K^i(K,S)\delta_S^{i+1}(K^\prime),$$ and therefore $\delta^K_i(K^\prime)=\text{Tr}_{i-1}(K,K^\prime)$, completing the proof.
\end{proof}

\begin{proof}[Proof of Theorem \ref{Thm2}]
Suppose $f$ is any function from equivalence classes of $p$-finite spaces to an abelian group $A$. We wish to extend $f$ to a characteristic function on $\text{Top}^\text{rx}_p$.

We will first resolve the case that $A$ is free\footnote{Actually, this argument works just as well when $A$ is torsion-free.}. Note that $\lessapprox$ is a total preorder on equivalence classes of $p$-finite spaces. Extend it (arbitrarily) to a total order $K_0<K_1<\cdots$. By Lemma \ref{Lem2}, $\delta^0_{K_i}(K_j)$ is nonzero if $i=j$ and $0$ if $i>j$. Define coefficients $c_i\in A\otimes\mathbb{Q}$ iteratively by $c_0=\frac{f(K_0)}{\delta^0_{K_0}(K_0)}$ and $$c_n=\frac{f(K_n)}{\delta^0_{K_n}(K_n)}-\sum_{i=0}^{n-1}c_i\frac{\delta^0_{K_i}(K_n)}{\delta^0_{K_n}(K_n)}.$$ By Remark \ref{Rmk}, there is a characteristic function $$\bar{f}=\sum_{i=0}^{\infty}c_i\delta^0_{K_i},$$ taking values in $A\otimes\mathbb{Q}$, and $\bar{f}(K_n)=f(K_n)$ for any $n$ (that is, for any $p$-finite $K_n$).

In fact, $\bar{f}$ takes values in $A\subseteq A\otimes\mathbb{Q}$ (which is a subgroup because $A$ is free). This is because any $X\in\text{Top}^\text{rx}_p$ is built out of $p$-finite spaces by iterated homotopy pushouts, and therefore $\bar{f}(X)$ is an integral linear combination of $\bar{f}(K)=f(K)$ for various $p$-finite spaces $K$. So $\bar{f}$ lifts $f$ as a characteristic function with values in $A$ -- if $A$ is free.

For arbitrary $A$, consider a surjective homomorphism $\phi:\mathbb{Z}^n\rightarrow A$. Let $f^\prime$ be an arbitrary function from equivalence classes of $p$-finite spaces to $\mathbb{Z}^n$ such that $f=\phi f^\prime$. Lift $f^\prime$ to a characteristic function $\bar{f}^\prime$ on $\text{Top}^\text{rx}_p$, as above, and then define $\bar{f}=\phi\bar{f}^\prime$. As desired, $\bar{f}$ is a characteristic function which lifts $f$.

The only thing that remains to be shown is that there cannot be two distinct extensions of $f$. However, any $p$-profinite space of rational characteristic is built from $p$-finite spaces via finite homotopy colimits, so a characteristic function on $\text{Top}_p^\text{rx}$ is uniquely determined by its values on $p$-finite spaces.
\end{proof}

\begin{proof}[Proof of Corollary \ref{Cor2}]
Theorem \ref{Thm2} asserts a natural isomorphism $$\text{Char}(\text{Top}_p^\text{rx},A)\rightarrow\text{Hom}(\bigoplus_{\text{Top}_p^\text{fin}}\mathbb{Z},A),$$ given by restricting to the $p$-finite spaces. By the Yoneda Lemma, $$K_0(\text{Top}_p^\text{rx})\cong\bigoplus_{\text{Top}_p^\text{fin}}\mathbb{Z},$$ as desired.
\end{proof}

\section{Calculation of K-theory}
\noindent Recall the main result we set out to prove:

\begingroup
\def\thetheorem{\ref{Thm1}}
\begin{theorem}
If $f$ is any function from equivalence classes of $p$-finite spaces to an abelian group $A$ (where $p$ varies over all primes), then $f$ extends uniquely to a characteristic function on $\text{Top}^\text{rx}$.
\end{theorem}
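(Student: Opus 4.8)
The plan is to reduce the global statement to the prime-by-prime statement (Theorem \ref{Thm2}) via $p$-completion. First I would check that for each prime $p$ the $p$-completion functor $(-)^\wedge_p\colon\text{Top}^\text{rx}\to\text{Top}^\text{rx}_p$ is well defined and right exact: it preserves finite colimits because it is a localization, and it lands in $\text{Top}^\text{rx}_p$ because the $p$-completion of a $\pi_\ast$-finite space is rational characteristic (for nilpotent pieces it is already $p$-finite; for a general $B\pi$ one uses a $p$-subgroup decomposition of $B\pi^\wedge_p$, then runs up the Postnikov tower). Granting this, each ``indicator'' characteristic function $\psi_K$ on $\text{Top}^\text{rx}_p$ furnished by Theorem \ref{Thm2} (with $\psi_K(K')=1$ if $K'\cong K$ and $\psi_K(K')=0$ for the other $p$-finite spaces) pulls back to a characteristic function $\Delta_{p,K}:=\psi_K\circ(-)^\wedge_p$ on $\text{Top}^\text{rx}$. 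I would also use $\Delta_\ast$, the ordinary Euler characteristic computed through rational homology: this is a $\mathbb{Z}$-valued characteristic function on $\text{Top}^\text{rx}$ taking the value $1$ on every $\pi_\ast$-finite space.

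For existence the key computation is that if $K'$ is $p$-finite then $(K')^\wedge_p\simeq K'$ while $(K')^\wedge_q\simeq\ast$ for $q\ne p$ (all homotopy groups of $K'$ are $p$-groups), so that $\Delta_{p,K}$ restricted to the $p$-finite spaces is the indicator of $K$, and vanishes on all $q$-finite spaces for $q\ne p$ and on $\ast$. One also needs a local finiteness statement: for fixed $X\in\text{Top}^\text{rx}$, only finitely many primes $p$ divide an order of a homotopy group in a presentation of $X$; for those, $X^\wedge_p$ is a single object of $\text{Top}^\text{rx}_p$ whose class in $K$-theory is a finite integer combination of $p$-finite classes (Corollary \ref{Cor2}), so only finitely many $\psi_K$ are nonzero on it; and for every other prime $q$ the space $X^\wedge_q$ is a (profinite) finite CW complex, on which every $\psi_K$ vanishes. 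Hence by Remark \ref{Rmk}, for any function $f$ on equivalence classes of $p$-finite spaces (with the contractible space, which is $p$-finite for every $p$, counted only once) the formula
$$\bar f \;=\; f(\ast)\,\Delta_\ast \;+\;\sum_{p}\sum_{K}\bigl(f(K)-f(\ast)\bigr)\,\Delta_{p,K}$$
(inner sum over non-contractible $p$-finite $K$) defines a characteristic function on $\text{Top}^\text{rx}$ with $\bar f(\ast)=f(\ast)$ and $\bar f(K')=f(K')$ for every $p$-finite $K'$. As in the proof of Theorem \ref{Thm2}, one handles $A$ free (or torsion-free) first and then deduces the general case from a surjection $\mathbb{Z}^n\twoheadrightarrow A$.

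For uniqueness, a characteristic function on $\text{Top}^\text{rx}$ is determined by its values on $\pi_\ast$-finite spaces, since those generate $\text{Top}^\text{rx}$ under finite colimits; so it suffices to show that a characteristic function vanishing on all $p$-finite spaces (including $\ast$) vanishes on all $\pi_\ast$-finite spaces, i.e.\ that every $\pi_\ast$-finite $X$ has $\chi(X)$ in the subgroup $G\subseteq K_0(\text{Top}^\text{rx})$ generated by $\chi(\ast)$ and the classes of the $p$-finite spaces. I would prove this by well-founded induction on a complexity measure (lexicographically on the Postnikov height $n$, then on $|\pi_1X|\cdot|\pi_2X|\cdots$). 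If only one prime appears then $X$ is $p$-finite and $\chi(X)\in G$. Otherwise, when the top homotopy group decomposes as a direct sum $\pi_nX\cong A\oplus B$ of its $p$-primary part and complement (automatic for $n\ge 2$; for $n=1$ when the group $\pi_1X$ itself decomposes this way), form the two two-stage Postnikov systems $X_A,X_B$ over $\tau_{\le n-1}X$, so that $X\simeq X_A\times_{\tau_{\le n-1}X}X_B$. The crucial point is that the pullback square
$$\xymatrix{X\ar[r]\ar[d] & X_A\ar[d]\\ X_B\ar[r] & \tau_{\le n-1}X}$$
is also a pushout: the maps $X\to X_A$ and $X\to X_B$ have $(n-1)$-connected fibres, so the pushout $X_A\cup_XX_B$ agrees with $\tau_{\le n-1}X$ below degree $n$, and its $\pi_n$ vanishes since $X_A,X_B$ kill the two summands of $\pi_nX\cong A\oplus B$ (a van Kampen/Blakers--Massey argument). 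Reading off the pushout relation gives $\chi(X)=\chi(X_A)+\chi(X_B)-\chi(\tau_{\le n-1}X)$, with each term strictly simpler, hence in $G$ by induction.

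Combining existence and uniqueness proves Theorem \ref{Thm1}, and Corollary \ref{Cor1} follows as in the $p$-complete case. I expect two points to be the real work. The first is the foundational claim that $(-)^\wedge_p$ carries $\text{Top}^\text{rx}$ into $\text{Top}^\text{rx}_p$: transparent for nilpotent inputs, but genuinely subtle for non-nilpotent $\pi_\ast$-finite spaces such as $B\pi$, where one must invoke the $p$-local structure of $\pi$ to recognize $B\pi^\wedge_p$ as a finite colimit of $p$-finite spaces. The second, and I think the main obstacle, is the decomposition step in the uniqueness argument: when $X$ is not nilpotent the top homotopy group (or $\pi_1$, e.g.\ $\pi_1=S_3$) may be indecomposable, so the naive prime-splitting fails; one needs a more delicate induction — peeling off prime-primary information one Postnikov degree at a time while controlling the twisting of the resulting fibrations, presumably again feeding in subgroup decompositions for the non-nilpotent fundamental group.
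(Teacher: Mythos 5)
Your existence argument is essentially the paper's: pull back the extensions furnished by Theorem \ref{Thm2} along $p$-completion, correct by the rational Euler characteristic $\chi_\mathbb{Q}$ so that the sum over primes becomes locally finite, and invoke Remark \ref{Rmk}. (Two small remarks: the fact that $p$-profinite completion carries $\pi_\ast$-finite spaces to $p$-finite spaces is a known input from Lurie's Sullivan Conjecture notes rather than new work, so the ``foundational'' worry you raise is citable; and since Theorem \ref{Thm2} already hands you $\mathbb{Z}$-valued indicator functions $\psi_K$, you can scale them by elements of $A$ directly, so no free/torsion-free reduction is needed at the global stage --- that reduction belongs to the proof of Theorem \ref{Thm2} itself.)

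The genuine gap is in your uniqueness argument, and you have diagnosed it yourself: the Postnikov induction that splits the top homotopy group into primary summands fails precisely where the problem is hard, namely for non-nilpotent $\pi_\ast$-finite spaces (already $\pi_1=S_3$ defeats the splitting, and for $n\geq 2$ one must additionally control the $\pi_1$-action and the $k$-invariants). ``One needs a more delicate induction'' is not a proof, and no amount of subgroup decomposition obviously repairs it. The paper sidesteps Postnikov towers entirely: define the support of $X$ to be the set of primes $p$ with $\hat{X}_p$ not contractible, and induct on $|\mathrm{supp}(X)|$ over all rationally contractible $X\in\text{Top}^\text{rx}$ of finite support. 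The inductive step takes the cofiber $C$ of the completion map $X\rightarrow\hat{X}_p$; the square with corners $X$, $\hat{X}_p$, $\ast$, $C$ is a pushout, so $f(X)=f(\hat{X}_p)+f(\ast)-f(C)$, while $C$ remains rationally contractible with $\mathrm{supp}(C)\subseteq\mathrm{supp}(X)\setminus\{p\}$ and $\hat{X}_p$ is built out of $p$-finite spaces (the $n=1$ case of the induction). This one cofiber sequence performs all the prime-splitting at once, with no nilpotence hypotheses and no analysis of homotopy groups, and it is the step you would need to supply to close your argument.
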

\addtocounter{theorem}{-1}
\endgroup

\begin{proof}
We know there is at least one characteristic function $\text{Top}^\text{rx}\rightarrow\mathbb{Z}$, namely the Euler characteristic in rational homology, $\chi_\mathbb{Q}$. Since any $\pi_\ast$-finite space is rationally contractible, $\chi_\mathbb{Q}(K)=1$ when $K$ is $\pi_\ast$-finite.

Fix a function $f$ as in the theorem statement, and say $f(\ast)=e\in A$. Let $f_p$ denote the restriction of $f$ to $p$-finite spaces for a particular $p$. By Theorem \ref{Thm2}, $f_p$ extends to a characteristic function $\bar{f}_p$ on $\text{Top}_p^\text{rx}$. Since $p$-completion $(-)^\wedge_p:\text{Top}\rightarrow\text{Top}_p^\wedge$ is right exact, there is a characteristic function $f_p$ on $\text{Top}^\text{rx}$ defined by $$f_p(X)=\bar{f}_p(\hat{X}_p).$$ If $K$ is a $q$-finite space, then $f_p(K)=f(K)$ when $q=p$, and $f_p(K)=e$ when $q\neq p$, since the $p$-completion of a $q$-finite space is contractible.

Given a rational characteristic space $X$, it can be written as a finite homotopy colimit of $\pi_\ast$-finite spaces. There are only finitely many primes that appear in any of the torsion of these $\pi_\ast$-finite spaces. Therefore, pick $p$ large enough so that it does not appear in the torsion of any of those spaces. In this case, $f_p(X)=e\chi_\mathbb{Q}(X)$, by induction on the number of cells in the finite colimit.

In particular, by Remark \ref{Rmk}, there is a characteristic function $$f=e\chi_\mathbb{Q}+\sum_p(f_p-e\chi_\mathbb{Q}),$$ and it extends the original function $f$ by construction.

It only remains to be shown that the lift is unique. If there were two lifts of $f$ to characteristic functions on $\text{Top}^\text{rx}$, then their difference would be $0$ on every $p$-finite space, for every $p$. So we have reduced the problem to the following lemma:
\end{proof}

\begin{lemma}
If $f:\text{Top}^\text{rx}\rightarrow A$ is a characteristic function with $f(K)=0$ for every $p$-finite $K$ (and every prime $p$), then $f(X)=0$ identically.
\end{lemma}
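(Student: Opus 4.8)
The plan is to prove this by induction on the number of cells used to build $X$ as a finite homotopy colimit of $\pi_\ast$-finite spaces. The base case is that $X$ is itself $\pi_\ast$-finite (or empty). If $X=\emptyset$ then $f(X)=0$ by the first axiom. If $X$ is $\pi_\ast$-finite but not $p$-finite for any single $p$, we need to reduce it to the $p$-finite case, and this is where the essential content lies: a $\pi_\ast$-finite space $X$ with torsion at several primes $p_1,\dots,p_r$ admits a homotopy pushout decomposition built from spaces with torsion at fewer primes. The cleanest tool is the arithmetic fracture square. For a simply connected (or nilpotent) $\pi_\ast$-finite space $X$, the square
$$\xymatrix{
X \ar[r]\ar[d] & \prod_p \hat{X}_p \ar[d] \\
X_\mathbb{Q} \ar[r] & \left(\prod_p \hat{X}_p\right)_\mathbb{Q}
}$$
is a homotopy pullback; but since $X$ is $\pi_\ast$-finite, $X_\mathbb{Q}\simeq\ast$, each $\hat{X}_p$ is a $p$-finite space, only finitely many $\hat{X}_p$ are non-contractible, and the bottom-right corner is also contractible. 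So in fact $X\simeq\prod_{i=1}^r \hat{X}_{p_i}$, a finite product of $p_i$-finite spaces.

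From here I would argue by induction on $r$, the number of primes appearing in the torsion. If $r=0$ then $X\simeq\ast$, and $f(\ast)=0$ because $\ast$ is $p$-finite for every $p$ (vacuously it's a $p$-group of order $1$) — indeed any prime works. If $r=1$, $X$ is $p$-finite and $f(X)=0$ by hypothesis. For $r\geq 2$, write $X\simeq Y\times Z$ where $Y=\hat{X}_{p_r}$ is $p_r$-finite and $Z=\prod_{i<r}\hat{X}_{p_i}$ has torsion at only $r-1$ primes. The key observation is that a characteristic function behaves predictably on products with a fixed $p$-finite factor: for fixed $p$-finite $Y$, the functor $Y\times(-)$ is right exact (it preserves homotopy pushouts and the initial object, since products commute with colimits in each variable in $\mathrm{Top}$ — this requires a small justification, or alternatively one uses that $Y$ is a finite cell complex so $Y\times(-)$ preserves pushouts cell-by-cell; but $Y$ is not finite, so one really wants the descent/universality of colimits in the $\infty$-topos $\mathrm{Top}$). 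Hence $W\mapsto f(Y\times W)$ is again a characteristic function on $\mathrm{Top}^\mathrm{rx}$, and it vanishes on every $q$-finite space $W$: if $q\neq p_r$ then $Y\times W$ is $\pi_\ast$-finite with torsion at the two primes $p_r,q$, and by the product decomposition above combined with... no — instead one should note $Y\times W$ need not be $q$-finite, so this isn't immediate. Let me instead induct directly: by the $r-1$ inductive hypothesis applied to the characteristic function $W\mapsto f(Y\times W)$ restricted to spaces with torsion at $\leq r-1$ primes — but I need that function to vanish on $p$-finite spaces for each $p$, which means I need $f(Y\times K)=0$ for $p$-finite $K$. That is a statement about $\pi_\ast$-finite spaces with torsion at exactly the primes $\{p_r\}\cup(\text{primes of }K)$, which has $\leq 2$ primes, so it follows from the $r=2$ case. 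So the real crux is the $r=2$ base case.

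For $r=2$, I must show $f(Y\times Z)=0$ when $Y$ is $p$-finite and $Z$ is $q$-finite, $p\neq q$, knowing only that $f$ vanishes on all single-prime-finite spaces. The trick is the same kind of resolution used throughout the paper, now applied in the product direction: pick a nontrivial $q$-finite $Z$ and build $Z$ out of simpler $q$-finite pieces via homotopy pushouts that, after crossing with $Y$, stay inside $\mathrm{Top}^\mathrm{rx}$ and eventually involve only spaces that are $p$-finite, $q$-finite, or a "smaller" product. Concretely, run the Postnikov/truncation induction on $Z$: if $Z=\ast$ then $Y\times Z=Y$ is $p$-finite and $f=0$; otherwise use a fiber sequence presentation of the top Postnikov stage of $Z$ as an iterated pushout (e.g. $K(\pi,n)$-bundles, realized via the bar construction as homotopy colimits over finite diagrams) to express $Y\times Z$ as a finite homotopy colimit of spaces $Y\times Z'$ with $Z'$ a "smaller" $q$-finite space; pushout-additivity plus induction on the size of $Z$ then forces $f(Y\times Z)$ to be an integer combination of $f(Y\times \ast)=f(Y)=0$ and values at intermediate products, which by a secondary induction vanish. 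This is precisely the author's robust iterative technique for $\pi_\ast$-finite spaces, and I expect the main obstacle to be bookkeeping: making the "size" ordering on pairs $(Y,Z)$ rigorous enough that both the outer induction (on number of primes $r$) and the inner induction (on the $q$-finite factor, via Lemma-\ref{Lem2}-style arguments) terminate, and verifying carefully that $Y\times(-)$ preserves finite colimits in $\mathrm{Top}^\mathrm{rx}$ — the latter following from universality of colimits in $\mathrm{Top}$ together with the fact that $p$-completion and the product of a $p$-finite with a $q$-finite space stay within rational characteristic spaces.
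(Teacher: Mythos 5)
There is a genuine gap, and it comes in two parts. First, your reduction of a $\pi_\ast$-finite space to a product $\prod_i\hat{X}_{p_i}$ via the arithmetic fracture square requires $X$ to be nilpotent, as you yourself note parenthetically — but $\pi_\ast$-finite spaces in this paper need not be nilpotent. For example, $B\Sigma_3$ is $\pi_\ast$-finite with torsion at $2$ and $3$, yet it cannot split as a product of a $2$-complete and a $3$-complete space, since $\Sigma_3$ is not the product of a $2$-group and a $3$-group. So the product decomposition that your entire induction on $r$ rests on is simply unavailable in general. Second, even where the splitting holds, your crux case $r=2$ — showing $f(Y\times Z)=0$ for $Y$ a $p$-finite and $Z$ a $q$-finite space — is not actually proved: the proposed resolution of $Y\times Z$ by ``smaller'' products $Y\times Z'$ never escapes the class of two-prime products, the bar construction you invoke is an infinite simplicial colimit rather than a finite one, and the ``secondary induction'' you defer to is exactly the statement in question. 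As written, the argument does not close up.

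The missing idea is to induct not on a product splitting of $X$ but on the \emph{support} of $X$ (the set of primes $p$ with $\hat{X}_p$ noncontractible), using the cofiber of the completion map itself: for $X$ rationally contractible with finite support and $p\in\mathrm{supp}(X)$, the pushout square with corners $X$, $\hat{X}_p$, $\ast$, $C=\mathrm{cofib}(X\rightarrow\hat{X}_p)$ gives $f(X)=f(\hat{X}_p)+f(\ast)-f(C)$, where $\hat{X}_p$ is built from $p$-finite spaces (handled by the single-prime hypothesis, since $p$-completion preserves finite colimits) and $C$ is rationally contractible with support strictly smaller than that of $X$. This strips off one prime at a time with a single pushout, needs no nilpotency, and applies to arbitrary rationally contractible finitely supported spaces rather than only to $\pi_\ast$-finite ones. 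Your outer reduction (every rational characteristic space is a finite colimit of $\pi_\ast$-finite spaces, so it suffices to kill $f$ on those) is correct and agrees with the paper; it is the multi-prime step that needs to be replaced by the cofiber argument.
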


\begin{proof}
For any space $X$, say that the \emph{support} of $X$ is the set of primes $p$ such that $\hat{X}_p$ is not contractible. We claim $f(X)=0$ for any $X\in\text{Top}^\text{rx}$ which is rationally contractible and has finite support. In particular, every $\pi_\ast$-finite space satisfies these conditions, and all rational characteristic spaces are built from $\pi_\ast$-finite spaces and $\emptyset$ via iterated homotopy pushouts, so the lemma will follow.

We prove this claim by induction on $n=|\text{supp}(X)|$. If $n=0$, then $X$ is contractible after rationalization or $p$-completion for any prime. Therefore, $X$ itself is contractible and $f(X)=0$.

If $n=1$, then $X\cong\hat{X}_p$ for some $p$. Since $X$ is built via finite colimits from $\pi_\ast$-finite spaces, and $p$-completion preserves colimits, it follows that $\hat{X}_p\cong X$ is built from $p$-finite spaces. Moreover, $f(K)=0$ for all $p$-finite $K$, so $f(X)=0$.

If $n\geq 2$, let $p$ be any prime in the support of $X$, and let $C$ be the cofiber of the completion map $X\rightarrow\hat{X}_p$. Since rationalization and $q$-completion commutes with colimits (including cofibers), $C$ is rationally contractible and $\text{supp}(C)\subseteq\text{supp}(X)-\{p\}$. By the induction hypothesis, $f(C)=0$. Now we have a pushout square $$\xymatrix{
X\ar[r]\ar[d] &\hat{X}_p\ar[d] \\
\ast\ar[r] &C,
}$$ so $f(X)=f(\hat{X}_p)+f(\ast)-f(C),$ which is $0$ by the induction hypothesis, as desired.
\end{proof}

\noindent Corollary \ref{Cor1} follows just as in the proof of Corollary \ref{Cor2}.

\section{Questions}
\noindent Note that Corollaries \ref{Cor1} and \ref{Cor2} together imply that $p$-completion $$\text{Top}^\text{rx}\rightarrow\prod_p\text{Top}_p^\text{rx}$$ becomes an isomorphism in $K_0$.

\begin{question}
What are the $K$-theory spectra of $\text{Top}^\text{rx}$ and $\text{Top}^\text{rx}_p$? Does $K(\text{Top}^\text{rx})$ split in the same way $K_0(\text{Top}^\text{rx})$ does?
\end{question}

\begin{question}
Suppose $\chi$ is the Baez-Dolan Euler characteristic on $\text{Top}^\text{rx}_p$; that is, if $K$ is a $p$-finite space, then $\chi(K)$ is the homotopy cardinality. Is it true that for any fiber sequence $F\rightarrow E\rightarrow B$, $\chi(E)=\chi(F)\chi(B)$?
\end{question}


\begin{thebibliography}{15}
\bibitem{Baez} J. Baez. Euler Characteristic vs. Homotopy Cardinality. \it{Lecture, Fields Institute on Applied Homotopy Theory}, http://math.ucr.edu/home/baez/cardinality/ (2003).
\bibitem{BaezDolan} J. Baez and J. Dolan. From Finite Sets to Feynman Diagrams. \it{Mathematics Unlimited -- 2001 and Beyond}, 1:29-50, Springer (2001).
\bibitem{Berman} J. Berman. Euler characteristics of finite homotopy colimits. arXiv: 1811.02481 (2018). Preprint.
\bibitem{FLS1} T. Fiore, W. L\"uck, and R. Sauer. Euler characteristics of categories and homotopy colimits. \textit{Documenta Mathematics}, 16:301-354 (2011).
\bibitem{FLS2} T. Fiore, W. L\"uck, and R. Sauer. Finiteness obstructions and Euler characteristics of categories. \textit{Advances in Mathematics}, 226:2371-2469 (2011).
\bibitem{Leinster} T. Leinster. The Euler characteristic of a category. \textit{Documenta Mathematica}, 13:21-49 (2008).
\bibitem{Lurie} J. Lurie. Course notes: the Sullivan Conjecture. Lectures 28-29, https://ocw.mit.edu/courses/mathematics/18-917-topics-in-algebraic-topology-the-sullivan-conjecture-fall-2007/lecture-notes/ (2007).
\bibitem{HTT} J. Lurie. Higher Topos Theory. \textit{Annals of Mathematics Studies}, 170, Princeton University Press. Princeton, NJ (2009).
\bibitem{Miller} H. Miller. The Sullivan Conjecture on Maps from Classifying Spaces. \it{Annals of Mathematics} 120(1):39-87 (1984).
\bibitem{PS1} K. Ponto and M. Shulman. The linearity of traces in monoidal categories and bicategories. \textit{Theory and Applications of Categories}, 31: 594-689 (2016).
\bibitem{PS2} K. Ponto and M. Shulman. The multiplicativity of fixed point invariants. \textit{Algebraic and Geometric Topology}, 14:1275-1306 (2014).
\bibitem{SY} T. Schlank and L. Yanovski. \it{Forthcoming}.
\bibitem{Sullivan} D. Sullivan. Geometric Topology, Part I. \it{MIT Press}, Cambridge, MA. p. 432 (1971).
\bibitem{Tao} T. Tao. Counting objects up to isomorphism: groupoid cardinality. \it{Blog post}, https://terrytao.wordpress.com/2017/04/13/counting-objects-up-to-isomorphism-groupoid-cardinality/ (2017).
\bibitem{Wall} C.T.C. Wall. Rational Euler Characteristic. \it{Math. Proc. of the Cambridge Phil. Soc.} 57(1):182-184 (1961).
\end{thebibliography}
\end{document}